\newtheorem{theorem}{Theorem}[section]
\newtheorem{corollary}[theorem]{Corollary}
\newtheorem{lemma}[theorem]{Lemma}
\newtheorem{definition}[theorem]{Definition}
\newtheorem{proposition}[theorem]{Proposition}
\DeclareMathOperator{\cat}{cat}
\DeclareMathOperator{\dist}{dist}
\DeclareMathOperator{\median}{med}
\let\epsilon\varepsilon
\let\phi\varphi
\newcommand{\rp}[1]{\mathbb{RP}^#1}
\newcommand{\kras}{\mathsf{kr}}
\newcommand{\hs}{\mathsf{hs}}
\title{The homotopy significant spectrum compared to the Krasnoselskii spectrum}
\author{S.J. Fokma}
\author{J.W. Portegies}
\affil{Eindhoven University of Technology\\s.j.fokma@student.tue.nl, j.w.portegies@tue.nl}
\begin{document}

\maketitle

\begin{abstract}
How to generalize the concept of eigenvalues of quadratic forms to eigenvalues of arbitrary, even,
homogeneous continuous functionals, if stability of the set of eigenvalues under small perturbations is required? We compare two possible generalizations, Gromov's homotopy significant spectrum and the Krasnoselskii spectrum. We show that in the finite dimensional case, the Krasnoselskii spectrum is contained in the homotopy significant spectrum, but provide a counterexample to the opposite inclusion. Moreover, we propose a small modification of the definition of the homotopy significant spectrum for which we can prove stability. Finally, we show that the Cheeger constant of a closed Riemannian manifold corresponds to the second Krasnoselskii eigenvalue.
\end{abstract}

\section{Introduction}
Eigenfunctions and eigenvalues of operators are used by important tools to identify the most relevant components of a mechanical system, to remove noise from a picture, or to learn a manifold close to a dataset. 
For instance, to denoise an image, one can take the image as an initial condition and run the heat flow, i.e. the steepest descent of the Dirichlet energy $\int |\nabla u|^2$, for a short time. By doing so, eigenfunctions with large eigenvalues are damped. As another example, the manifold-learning algorithms Diffusion maps \cite{Coifman-Diffusion-2006} and Eigenmaps \cite{Belkin-Laplacian-2003} use eigenfunctions of a Laplace operator to map a high-dimensional dataset to a lower-dimensional space.

The Diffusion maps and Eigenmaps algorithms implicitly use that the eigenvalues and eigenfunctions of the Laplace operator defined on a manifold give away geometric information about the manifold itself. For instance, by the Weyl law, the asymptotic growth of the eigenvalues exposes the dimension and the volume of the manifold. 

The relevant operators in the above examples are symmetric and linear, but sometimes there are good reasons to look at nonlinear operators instead. For instance, (linear) heat flow on pictures blurs edges, although correct identification of the edges is important for medical applications \cite{Rudin-Nonlinear-1992, Zeune-Multiscale-2017, Duits-Total-2019}. It may then be better to follow the steepest descent of the total variation functional $\int |\nabla u|$ instead, which is non-linear. As another example, for spaces and datasets that locally do not look Euclidean, the generalization of the Laplace operator itself becomes nonlinear (see e.g. \cite{Shen-non-linear-1998} for the case of Finsler manifolds). Still, counterparts of eigenvalues and eigenfunctions may capture important information about the dataset.

The question is then, however, how to properly generalize eigenvalues and eigenfunctions to such a nonlinear context. Below we will motivate different generalizations. The objective of the article will be to compare two of them.

\subsection*{Eigenvalues as critical values}

If $E$ and $F$ are two quadratic forms on a finite-dimensional vector space $V$, and $F$ is positive-definite, then the spectral theorem gives that there exists a basis of eigenvectors $v_1, \dots, v_n$ in which both $E$ and $F$ are diagonal. The asymmetry between $E$ and $F$ can be removed if the dimension of $V$ is at least $3$: in that case $F$ no longer needs to be positive-definite, but $E$ and $F$ should be such that they only vanish simultaneously at the origin. This result is a bit more involved, but a short and elegant proof is due to Calabi \cite{Calabi-Linear-1964}.

If $F$ is positive-definite it induces an inner product on $V$ and the eigenvectors and eigenvalues of $E$ with respect to $F$ are exactly the critical points and values of the normalized energy
\[
\mathcal{E}(u) := \frac{E(u)}{F(u)}.
\]
Again, a more symmetric approach looks at the map $u \mapsto [E(u): F(u)]$ to projective space. As it is invariant under multiplication of the input by a scalar, it induces a map $f$ from projective to projective space. The eigenvectors are exactly the critical points of this map.

This suggests the following generalization if $E$ and $F$ are even, $\alpha$-homogeneous, smooth functions, only vanishing simultaneously at the origin. We can then define eigenvectors of the pair $(E,F)$ as critical points of the $0$-homogeneous normalized functional
\[
f([u]) = [E(u) : F(u)].
\]
Critical points of this functional obey the equation
\[
F(u) DE (u) = E(u) DF(u),
\]
where $D$ denotes the derivative. As a sidenote, the homogeneity of $E$ and $F$ makes their derivative in a point $u$ in the direction of $u$ easy to evaluate. Hence it follows that if a function $u$ satisfies 
\begin{equation}
\label{eq:ev2}
\nu DE(u) = \mu DF(u),
\end{equation}
then necessarily $[\mu, \nu] = [E(u), F(u)]$. In other words, if one would prefer $(\ref{eq:ev2})$ as a definition of eigenvectors and eigenvalues, rather than critical points and values of $\mathcal{E}$, then one would not obtain more eigenvectors.

In practical situations, the functionals $E$ and $F$ may not be smooth, and in that case one would need to find proper generalizations, for instance replacing derivatives by subderivatives. Another generalization uses the Morse-theory link between critical points of a Morse function and the topology of sublevel sets. This will be our perspective below.

\subsubsection*{Two examples}

Let us give two examples. The first is when $E$ is the total variation functional and $F$ is the $L^2$ norm. These eigenfunctions play a role in the denoising of images. In this particular case, there is even a spectral decomposition associated to the eigenfunctions. Moreover, eigenfunctions form characteristic building blocks in the $L^2$-steepest descent of the total variation \cite{Gilboa-Total-2014, Burger-Spectral-2016, Zeune-Multiscale-2017}.

The second example is the Cheeger constant, which has versions for various geometric objects. In this case, $E$ is again the total variation functional, and $F$ is the $L^1$ functional. In general, a small Cheeger constant expresses that a geometric object can be cut into two large pieces with a small cut. This is an important characteristic for graphs and networks. For manifolds, the Cheeger constant can be expressed as follows \cite[Remark 9.3]{Ambrosio-New-2017}, 
\[
h_1 := \inf_{u \in L^1 \setminus \{ 0 \}, u \text{ not constant} } \frac{TV(u)}{ \inf_{a \in \mathbb{R}} \int | u - a| } 
\]
where $TV(u)$ denotes the total variation of $u$ (see \ref{ap:cheeger} for a precise definition). The Cheeger constant can also be written as
\[
h_1 = \inf\{ TV(u) \ | \ u \in L^1(M), \|u\|_1 = 1, \median u = 0 \}
\]
In \ref{ap:cheeger} we show that the Cheeger constant of a closed Riemannian manifold can be seen as a nonlinear eigenvalue. We believe that this result is known, but decided to include our proof as it is rather short and is a nice illustration of the concepts introduced in the article. For Cheeger constants of open sets in $\mathbb{R}^n$, an analogous result follows by combining results by Parini \cite{Parini-Second-2010} and Littig and Schuricht \cite{Littig-Convergence-2014}. In the context of connected graphs, Chang gave a characterization of the Cheeger constant as a nonlinear eigenvalue \cite[Theorem 5.12 and Theorem 5.15]{Chang-Spectrum-2016}.

\subsubsection*{Stability}

Summarizing so far, we could generalize eigenvalues and eigenfunctions to critical values and critical points of a function $f$ defined on projective space. However, considering \emph{all} critical points has the disadvantage that small perturbations to $f$ can lead to large changes to the set of eigenvalues: in other words, the spectrum defined in this way is unstable under perturbations. 

For instance if we want to use spectra to analyze datasets, we would want to use methods that are stable in the sense that if the data changes slightly, they would give similar results. In a broader sense, we are often interested in the continuity of spectra under convergence of geometric objects in some topology. In \cite{Ambrosio-New-2017} Ambrosio and Honda showed for instance the continuity of the Cheeger constant in a class of geometric objects with generalized Ricci curvature lower bounds. In \cite{Ambrosio-Continuity-2018} the authors aimed to show continuity of eigenvalues for spaces that are not locally Euclidean. For such an application, a stable spectrum is crucial.

\subsection*{Homotopy significant spectrum}
Stability is one motivation\footnote{Although stability is a motivation, we currently do not see how to prove it for the original definition. We will come back to this point below.} for Gromov's definition of the {\bf homotopy significant spectrum} \cite{Gromov2015}, see also his Pauli Lectures in 2009 at ETH (which are available online \cite{Gromov-Pauli-2009}). This spectrum is defined for continuous, real-valued functions $E$ on a topological space $\Phi$. A value $a \in \mathbb{R}$ is in the homotopy significant spectrum of such an $E$ if the homotopy type of the sublevel set $\Phi_{\leq t} := \{ u \in \Phi \ | \ E(u) \leq t\}$ changes `significantly' if $t$ passes through $a$. Precisely, the value $a$ is in the homotopy significant spectrum if there does not exist a homotopy $H:\Phi_{\leq a}\times[0,1]\rightarrow \Phi$ such that $H(x,0)=\mathbbm{1}_{\Phi_{\leq a}}(x)$ for all $x\in\Phi_{\leq a}$, and $H(\Phi_{\leq a},1)\subset\Phi_{<a}$. The adjective \emph{significant} is reflected in that the homotopy $H$ is allowed to take values in \emph{all} of the ambient space $\Phi$, and not just in the sublevel set $\Phi_{\leq a}$.

\subsection*{Krasnoselskii spectrum}
The homotopy significant spectrum was introduced to lead to \emph{stable} eigenvalues, but it does not give a procedure to find the eigenvalues, or to index them in some way. That is, we do not know how to write down a bijective mapping from $\mathbb{N}$ or $\mathbb{R}$ to the set of homotopy significant eigenvalues, and in particular we do not know if the set of homotopy significant eigenvalues is countable. 

In contrast, in the linear case, the Courant-Fisher minmax principle \emph{does} provide a procedure to find eigenvalues: the $k$th eigenvalue is given by
\begin{equation}
\lambda_k = \inf_{\substack{V \subset H  \\ \dim V \geq k}} \sup_{u \in S(V)} E(u),
\end{equation} 
where the infimum is over all linear subspaces $V$ of the Hilbert space $H$, and $S(V)$ denotes the unit sphere in $V$.

Surely, in a nonlinear setting one can also use minmax techniques to find critical points, although the definition needs to be adapted slightly. The linearity condition on $V$ is too restrictive, but we can replace it by an optimization over closed symmetric subsets of the sphere, denoted by $\mathcal{V}(S(H))$. This results in
\begin{equation}
\label{eq:lambda-krasnoselskii}
\lambda_k = \inf_{\substack{A \in \mathcal{V}(S(H)) \\ \dim A \geq k}} \sup_{u \in A} E(u).
\end{equation} 
This definition only needs a good generalization of \emph{dimension}. This way of defining eigenvalues was also put forward by Gromov \cite{Gromov-Dimension} and was used as a definition for eigenvalues of the Laplace operators on Finsler manifolds by Zhongmin Shen \cite{Shen-non-linear-1998}.

Different concepts of dimension could a priori lead to different sets of eigenvalues. In this article, we will concentrate on one concept of dimension: the Krasnoselskii genus (which we will explain in the next section). We call $\lambda_k$ as defined by (\ref{eq:lambda-krasnoselskii}) the $k$th eigenvalue in the Krasnoselskii spectrum and denote it by $\kras_k$, if we give `$\dim$' the interpretation of the Krasnoselskii genus.

\medskip

This Krasnoselskii spectrum indeed allows for continuity proofs of spectra of operators: in \cite{Ambrosio-Continuity-2018} the authors show that the values in the Krasnoselskii spectrum are continuous with respect to measured Gromov-Hausdorff convergence of the underlying geometric objects. Similarly, the Cheeger constant is in fact the second Krasnoselskii eigenvalue, and the result by Littig and Schuricht \cite{Littig-Convergence-2014} can be seen as a stability result.

\subsection*{Comparison and stability}

In this article we investigate how the homotopy significant spectrum and the Krasnoselskii spectrum relate to each other. We show that the Krasnoselskii spectrum is always contained in the homotopy significant spectrum, but that in general they are not equal.

We conclude with a discussion on the stability of the homotopy significant spectrum. So far, we are not aware of a result on the stability of the spectrum for all continuous functions, and we are only able to prove its stability for Morse functions. Hence we propose an alternative definition of the homotopy significant spectrum, the \emph{weak homotopy significant spectrum}, of which its stability for continuous functions follows more directly.

\section{The Krasnoselskii genus and the Lusternik-Schnirelmann subspace category}
Before we can actually start comparing the two spectra, we will need to go over some properties of the Krasnoselskii genus and the closely related Lusternik-Schnirelmann subspace category. 

For a closed set $C$ in a Banach space $B$, let $\mathcal{V}(C)$ be defined as 
\begin{equation}
\mathcal{V}(C)= \{A\subset C | A \text{ closed and symmetric}\}
\end{equation} in which a symmetric subset is a subset $A\subset B$ which satisfies $A=-A$.
\begin{definition} \label{def:kgenus}
Let $B$ be a Banach space and $A\in\mathcal{V}(B)$. Then we define the Krasnoselskii genus $\gamma(A)$ of non-empty $A$ as the smallest integer $m \in \mathbb{N}$ such that there exists an odd, continuous function $h: A \to \mathbb{R}^m \backslash\{0\}$. If no such $m$ exists, we say $\gamma(A)= \infty$.
If $A=\emptyset$, then we define $\gamma(A)=0$.
\end{definition}
For later use, we will list a few general properties of the Krasnoselskii genus \cite[Prop. 5.4]{Struwe}.

\begin{proposition} \label{th:propgenus}
Let $B$ be a Banach space, and let $A,A_1,A_2\in\mathcal{V}(B)$. Then the following hold:
\begin{enumerate}
\item $\gamma(A)\geq0$, and $\gamma(A)=0$ if and only if $A=\emptyset$ (positive-definiteness);
\item $\gamma(A_1)\leq \gamma(A_2)$ if $A_1\subset A_2$  (monotonicity);
\item $\gamma(A_1\cup A_2)\leq \gamma(A_1)+\gamma(A_2)$ (subadditivity);
\item if $A$ is compact and $0\notin A$, then $\gamma(A)<\infty$ and there exists a neighbourhood $N$ of $A$ in $B$ such that $\overline{N}\in\mathcal{V}(B)$ and $\gamma(A)=\gamma(\overline{N})$.
\end{enumerate}
\end{proposition}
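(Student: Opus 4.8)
The plan is to verify the four items in turn; (1)--(3) are essentially bookkeeping with the definition, while (4) carries the real content. For (1), the genus takes values in $\mathbb{N}\cup\{\infty\}$ by definition, so $\gamma(A)\geq 0$, and $\gamma(A)=0$ is assigned precisely to $A=\emptyset$; conversely a nonempty $A\in\mathcal V(B)$ has $\gamma(A)\geq 1$, since either $0\in A$, whence no odd map $A\to\mathbb R^m\setminus\{0\}$ exists (an odd map sends $0$ to $0$) and $\gamma(A)=\infty$, or $0\notin A$ and already one nonzero coordinate is needed. For (2), if $h\colon A_2\to\mathbb R^m\setminus\{0\}$ is odd and continuous and $A_1\subset A_2$, then $h|_{A_1}$ is again odd, continuous, and nonvanishing, so $\gamma(A_1)\leq m$; taking the infimum over such $m$ gives monotonicity.

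For (3), I would assume $\gamma(A_1)=m_1$ and $\gamma(A_2)=m_2$ are both finite (otherwise there is nothing to prove), with odd continuous witnesses $h_i\colon A_i\to\mathbb R^{m_i}\setminus\{0\}$. Since a Banach space is metric, hence normal, and each $A_i$ is closed, the Tietze extension theorem applied componentwise gives continuous extensions $\tilde h_i\colon B\to\mathbb R^{m_i}$; I would then restore oddness by passing to $x\mapsto\tfrac12\big(\tilde h_i(x)-\tilde h_i(-x)\big)$, which still agrees with $h_i$ on $A_i$ because $h_i$ is already odd there. Stacking the two symmetrized extensions yields an odd continuous map $A_1\cup A_2\to\mathbb R^{m_1+m_2}$ that is nonvanishing, since at a point of $A_i$ its $i$-th block equals $h_i$ and is nonzero; hence $\gamma(A_1\cup A_2)\leq m_1+m_2$.

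For (4), I would first prove finiteness. Since $A$ is compact with $0\notin A$, every $x\in A$ satisfies $x\neq -x$, so one can pick $\rho_x>0$ with $\overline{B_{\rho_x}(x)}\cap\overline{B_{\rho_x}(-x)}=\emptyset$; compactness gives a finite subcover $B_{\rho_1}(x_1),\dots,B_{\rho_n}(x_n)$ of $A$. With the bump functions $\phi_i(x)=\max\{0,\rho_i-\|x-x_i\|\}$, the map $h=(h_1,\dots,h_n)$ defined by $h_i(x)=\phi_i(x)-\phi_i(-x)$ is odd and continuous, and at each $x\in A$ some $\phi_i(x)>0$ while $\phi_i(-x)=0$ by the disjointness, so $h$ is nonvanishing and $\gamma(A)\leq n<\infty$. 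For the neighbourhood, set $m=\gamma(A)$ with witness $h\colon A\to\mathbb R^m\setminus\{0\}$ and, as in (3), Tietze-extend and symmetrize to an odd continuous $\hat h\colon B\to\mathbb R^m$ with $\hat h|_A=h$. The set $\{\hat h\neq 0\}$ is open, symmetric, and contains the compact set $A$, hence contains a uniform $\delta$-neighbourhood of $A$; then $N=\{x\in B:\dist(x,A)<\delta/2\}$ is an open symmetric neighbourhood of $A$ whose closure $\overline N$ is closed, symmetric, and still contained in $\{\hat h\neq 0\}$, so $\overline N\in\mathcal V(B)$ and $\hat h|_{\overline N}$ shows $\gamma(\overline N)\leq m$; together with $\gamma(A)\leq\gamma(\overline N)$ from (2) this gives $\gamma(A)=\gamma(\overline N)$.

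The delicate points, and the ones I would be most careful about, are the symmetrization steps in (3) and (4): one must check that averaging the Tietze extension against the antipodal map does not reintroduce zeros on the set where the original odd map was nonvanishing — it does not, precisely because that map is already odd there — and that compactness really upgrades ``nonvanishing on $A$'' to ``nonvanishing on a closed symmetric neighbourhood of $A$''. Everything else is routine.
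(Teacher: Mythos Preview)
Your argument is correct. The paper does not actually prove this proposition; it merely records the statement and cites \cite[Prop.~5.4]{Struwe} for the proof. What you have written is essentially the standard proof one finds in Struwe (and elsewhere): restriction for monotonicity, Tietze extension followed by antipodal symmetrization for subadditivity, a finite bump-function construction for finiteness of the genus on compacta, and then extending the optimal witness to a neighbourhood for the continuity property.

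Two minor remarks, neither of which is a gap. In (1), since the definition assigns $\gamma(\emptyset)=0$ separately and for nonempty $A$ asks for the smallest $m\in\mathbb{N}$ with a witness into $\mathbb{R}^m\setminus\{0\}$, the inequality $\gamma(A)\geq 1$ for $A\neq\emptyset$ is immediate without splitting into the cases $0\in A$ and $0\notin A$; your case analysis is fine but not needed. In (4), the sentence ``compactness \dots\ contains a uniform $\delta$-neighbourhood'' is justified by the positive distance between the compact set $A$ and the closed set $\{\hat h=0\}$, which is the standard fact you are implicitly using; it may be worth stating it explicitly, since in an infinite-dimensional Banach space one cannot appeal to total boundedness of open covers but the compact--closed distance argument still goes through.
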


Since we will be mostly working with subsets of the unit sphere in a Banach space $B$, the following proposition concerning their genus will also be useful.

\begin{proposition} \label{th:genusS}
Let $B$ be an $n$-dimensional Banach space. Then the unit sphere $S$ has Krasnoselskii genus $n$, and the genus of all proper, closed and symmetric subsets of $S$ is strictly smaller than $n$.
\end{proposition}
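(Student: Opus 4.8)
The plan is to prove the two assertions separately, and the backbone of both is the Borsuk--Ulam theorem. For the first claim, that $\gamma(S) = n$, I would argue $\gamma(S) \leq n$ and $\gamma(S) \geq n$. The upper bound is immediate: since $B$ is $n$-dimensional, $S$ is a subset of $B \cong \mathbb{R}^n$, and the inclusion map $S \hookrightarrow \mathbb{R}^n \setminus \{0\}$ is odd and continuous, so $\gamma(S) \leq n$ straight from Definition \ref{def:kgenus}. For the lower bound $\gamma(S) \geq n$, suppose for contradiction that there were an odd continuous map $h : S \to \mathbb{R}^m \setminus \{0\}$ with $m < n$, equivalently $m \leq n-1$. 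Composing with a linear embedding (or the identity on the first $m$ coordinates) gives an odd continuous map $S \to \mathbb{R}^{n-1} \setminus \{0\}$, and after normalizing, an odd continuous map $S \to S^{n-2}$. But $S$ is homeomorphic to the standard sphere $S^{n-1}$ via the homeomorphism given by, say, radial projection after fixing a linear isomorphism $B \cong \mathbb{R}^n$; the issue is that a general linear isomorphism need not send $S$ (the unit sphere of the Banach norm) to the Euclidean sphere, but it is still an odd homeomorphism onto the Euclidean unit sphere's image, and more directly the map $x \mapsto x/\|x\|_{\mathrm{eucl}}$ is an odd homeomorphism from $S$ onto $S^{n-1}_{\mathrm{eucl}}$. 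Precomposing, we obtain an odd continuous map $S^{n-1} \to S^{n-2}$, contradicting the Borsuk--Ulam theorem. Hence $\gamma(S) \geq n$, and equality follows.

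For the second claim, let $A \subsetneq S$ be a proper, closed, symmetric subset; I want $\gamma(A) \leq n-1$. The key observation is that since $A$ is proper there exists a point $p \in S \setminus A$, and by symmetry also $-p \notin A$ — wait, this needs care: $A$ symmetric and $p \notin A$ does give $-p \notin A$ since $A = -A$. Now $A$ is a closed subset of $S \setminus \{p, -p\}$. The punctured sphere $S \setminus \{p,-p\}$ admits an odd continuous map to $\mathbb{R}^{n-1} \setminus \{0\}$: identifying $S$ with the Euclidean $S^{n-1}$ as above (which sends $p,-p$ to an antipodal pair $q,-q$), the complement $S^{n-1} \setminus \{q,-q\}$ deformation retracts oddly onto the equatorial $S^{n-2}$ orthogonal to $q$, and the retraction $x \mapsto (x - \langle x, q\rangle q)/\|x - \langle x,q\rangle q\|$ is odd, continuous, and well-defined away from $\pm q$, landing in $S^{n-2} \subset \mathbb{R}^{n-1} \setminus \{0\}$. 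Restricting this map to $A$ gives an odd continuous map $A \to \mathbb{R}^{n-1} \setminus \{0\}$, so $\gamma(A) \leq n-1 < n$.

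The main obstacle — really the only nontrivial input — is the Borsuk--Ulam theorem in the form ``there is no odd continuous map $S^{n-1} \to S^{n-2}$,'' which is what powers the lower bound $\gamma(S) \geq n$; everything else is the construction of explicit odd maps and the bookkeeping of passing between the Banach unit sphere $S$ and the Euclidean sphere via a fixed linear isomorphism $B \cong \mathbb{R}^n$ (using that any such isomorphism, composed with Euclidean normalization, is an odd homeomorphism of $S$ onto $S^{n-1}$, since linear maps commute with the antipodal map). One should also double-check the degenerate low-dimensional cases: for $n = 1$, $S$ consists of two antipodal points, $\gamma(S) = 1$ since the inclusion into $\mathbb{R} \setminus \{0\}$ is odd and no odd map to $\mathbb{R}^0 \setminus \{0\} = \emptyset$ exists, and the only proper closed symmetric subset is $\emptyset$ with genus $0 < 1$, so the statement holds. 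For $n \geq 2$ the argument above applies verbatim. Alternatively, both parts of this proposition are standard and can be cited from \cite{Struwe}, but the self-contained argument via Borsuk--Ulam is short enough to include.
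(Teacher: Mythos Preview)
Your proof is correct and follows essentially the same route as the paper: for the first claim the paper simply cites \cite[Prop.~5.2]{Struwe} whereas you unpack the standard Borsuk--Ulam argument behind that citation, and for the second claim both you and the paper pick an antipodal pair $\pm p$ missing from $A$ and project onto the orthogonal hyperplane (the paper uses the unnormalized coordinate projection $(x_1,\dots,x_n)\mapsto(x_1,\dots,x_{n-1})$, you use the normalized equatorial retraction, which is the same map up to rescaling). Your extra care in passing between the Banach-norm sphere and the Euclidean sphere via an odd homeomorphism is a point the paper leaves implicit.
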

\begin{proof}
The first part of the statement follows immediately from Proposition 5.2 in \cite{Struwe}.

For the second part assume there exists a proper, closed and symmetric subset $A$ of $S$ with $\gamma(A)=n$. Assume without loss of generality that $A$ does not contain the points $(0,\dotsc,0,\pm1)$. Define $p:A\rightarrow \mathbb{R}^{n-1}$ as $p(x_1,\dotsc,x_n)=(x_1,\dotsc,x_{n-1})$. Then $p$ is odd, continuous and $0\notin p(A)$. This constitutes a contradiction since $p$ maps to $\mathbb{R}^{n-1}$ and $n-1<\gamma(A)$.
\end{proof}

We say that a function $f$ is even if it satisfies $f(x)=f(-x)$ for all in $x$ in the domain of $f$. Using the Krasnoselskii genus we can pose a definition for eigenvalues for even and continuous functions. 

\begin{definition}
Let $B$ be a Banach space with unit sphere $S$ and let $f:S\rightarrow\mathbb{R}$ be an even and continuous function. Then we define for $k\in\{1,\dotsc,n\}$, if B is $n$-dimensional, or for $k \in \mathbb{N}$, if $B$ is infinite-dimensional, the eigenvalues\begin{equation} 
\kras_k = \inf_{\substack{A \in\mathcal{V}(S) \\ \gamma(A)\geq k}} \sup_{x \in A} f(x).
\end{equation}
We will refer to these eigenvalues as the Krasnoselskii spectrum of $f$.
\end{definition}
Note that it follows from this definition and Proposition \ref{th:genusS} that $\kras_1=\inf_{x\in S^n} f(x)$ and $\kras_{n+1}=\sup_{x\in S^n} f(x)$.

Because we also want to consider the functions on the real projective space induced by even functions on the sphere, we will need the Lusternik-Schnirelmann subspace category.

\begin{definition}
Let $\Phi$ be a topological space and let $A\subset\Phi$ be a closed subset. The Lusternik-Schnirelmann subspace category of $A$ in $\Phi$, denoted $\cat_\Phi A$, is the smallest integer $m$ such that $A$ is covered by closed sets $C_1,\dotsc, C_m$ which are contractible in $\Phi$. If no such covering exists, we write $\cat_\Phi A = \infty$. If $A=\emptyset$, then we define $\cat_\Phi A=0$.
\end{definition}

We would like to emphasize that in this definition the category makes use of closed instead of open sets, even though the latter is often encountered in literature nowadays. Here, we need to make use of the closed subsets to be able to relate the Krasnoselskii genus and category as in Lemma \ref{th:geniscat} below. Before we prove the lemma, we note however that since they are both examples of an index \cite[p. 99-100]{Struwe}, the category has similar properties as the Krasnoselskii genus \cite[Prop. 5.13]{Struwe}.

\begin{proposition} \label{th:proplscat}
Let $\Phi$ be a topological space and let $A,A_1,A_2$ be closed subsets of $\Phi$. Then the following hold:
\begin{enumerate}
\item $\cat_\Phi (A)\geq0$, and $\cat_\Phi (A)=0$ if and only if $A=\emptyset$ (positive-definiteness);
\item $\cat_\Phi (A_1)\leq \cat_\Phi (A_2)$ if $A_1\subset A_2$ (monotonicity);
\item $\cat_\Phi (A_1\cup A_2)\leq \cat_\Phi(A_1)+\cat_\Phi(A_2)$ (subadditivity);
\end{enumerate}
\end{proposition}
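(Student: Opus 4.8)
The plan is to read off all three properties directly from the definition of $\cat_\Phi$, following the same pattern as the proof of Proposition~\ref{th:propgenus} for the genus; since the category is an index in the sense of \cite[p.~99--100]{Struwe}, this amounts to verifying the index axioms in this concrete case. For positive-definiteness, I would first observe that $\cat_\Phi A$ is by construction a non-negative integer or $\infty$, so $\cat_\Phi A\geq 0$ is immediate. The equivalence ``$\cat_\Phi A=0$ if and only if $A=\emptyset$'' is then just unwinding definitions: $\cat_\Phi\emptyset=0$ by convention, and conversely a covering of $A$ by $m=0$ closed sets is the empty family, whose union is $\emptyset$, so $A=\emptyset$; equivalently, any non-empty $A$ needs at least one member in every admissible covering, hence $\cat_\Phi A\geq 1$.

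For monotonicity, suppose $A_1\subset A_2$. The claim is trivial if $\cat_\Phi A_2=\infty$, so assume $\cat_\Phi A_2=m<\infty$ and choose closed sets $C_1,\dots,C_m$, each contractible in $\Phi$, with $A_2\subset C_1\cup\dots\cup C_m$. I would then pass to the sets $C_i\cap A_1$: they are closed in $\Phi$ because both $C_i$ and $A_1$ are, their union contains $A_1$, and each $C_i\cap A_1$ is still contractible in $\Phi$ since a null-homotopy of the inclusion $C_i\hookrightarrow\Phi$ restricts to a null-homotopy of the inclusion $C_i\cap A_1\hookrightarrow\Phi$. This exhibits a covering of $A_1$ by $m$ sets of the required type, giving $\cat_\Phi A_1\leq m=\cat_\Phi A_2$.

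For subadditivity, the inequality is again trivial unless $\cat_\Phi A_1=m_1$ and $\cat_\Phi A_2=m_2$ are both finite; in that case I would simply concatenate optimal coverings. Pick closed sets $C_1,\dots,C_{m_1}$ covering $A_1$ and $D_1,\dots,D_{m_2}$ covering $A_2$, all contractible in $\Phi$. Then $C_1,\dots,C_{m_1},D_1,\dots,D_{m_2}$ is a family of $m_1+m_2$ closed sets contractible in $\Phi$ whose union contains $A_1\cup A_2$, so $\cat_\Phi(A_1\cup A_2)\leq m_1+m_2=\cat_\Phi A_1+\cat_\Phi A_2$. I do not expect a genuine obstacle anywhere; the only points needing a little care are the restriction-of-homotopy argument in the monotonicity step, the handling of the empty covering and of the value $\infty$ in the boundary cases, and the (harmless) convention that ``closed'' means closed in $\Phi$ throughout, which is consistent because $A$, $A_1$ and $A_2$ are assumed closed in $\Phi$.
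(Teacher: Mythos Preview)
Your argument is correct; all three verifications go through exactly as you describe. One small simplification in the monotonicity step: there is no need to pass to $C_i\cap A_1$, since the original sets $C_1,\dots,C_m$ already cover $A_1\subset A_2$ and are closed and contractible in $\Phi$; but your restriction-of-homotopy argument is valid as written.

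Note that the paper does not actually supply a proof of this proposition. It is stated with a reference to \cite[Prop.~5.13]{Struwe}, relying on the remark that the Lusternik--Schnirelmann category is an example of an index in the sense of \cite[p.~99--100]{Struwe}. Your direct verification from the definition is precisely the standard argument behind that citation, so there is no divergence in method---you have simply written out what the paper leaves to the reference.
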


Let $B$ be a Banach space and $A\subset B \setminus\{0\}$ a symmetric subset. Denote with $A/\mathbb{Z}_2$ the image of $A$ under the quotient map which identifies $\{u,-u\}$ for all $u\in A$. Under this identification, Lemma \ref{th:geniscat} shows that the Krasnoselskii genus and the category are the same for elements of $\mathcal{V}(S)$.

\begin{lemma} \label{th:geniscat}
Let $B$ be an $n$-dimensional Banach space with unit sphere $S$, and let $A\in\mathcal{V}(S)$. Then $\gamma(A)=\cat_{S/\mathbb{Z}_2}(A/\mathbb{Z}_2)$.
\end{lemma}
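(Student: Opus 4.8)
The plan is to prove the two inequalities $\gamma(A) \leq \cat_{S/\mathbb{Z}_2}(A/\mathbb{Z}_2)$ and $\cat_{S/\mathbb{Z}_2}(A/\mathbb{Z}_2) \leq \gamma(A)$ separately, translating covers by contractible sets into odd maps and vice versa. Throughout I will write $\pi \colon S \to S/\mathbb{Z}_2$ for the quotient map and $m = \cat_{S/\mathbb{Z}_2}(A/\mathbb{Z}_2)$, $k = \gamma(A)$. The degenerate cases $A = \emptyset$ are immediate from the normalizations, so assume $A \neq \emptyset$.

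For $\gamma(A) \leq m$: suppose $A/\mathbb{Z}_2$ is covered by closed sets $D_1, \dots, D_m \subset S/\mathbb{Z}_2$, each contractible in $S/\mathbb{Z}_2$. Let $C_i = \pi^{-1}(D_i) \cap A$; these are closed and symmetric and cover $A$. The key observation is that a contraction of $D_i$ in $S/\mathbb{Z}_2$ lifts (since $S \to S/\mathbb{Z}_2$ is a double cover and $[0,1]\times C_i$ is simply connected, or rather since the lifting just needs to be done componentwise after choosing a lift of the basepoint on each "sheet") to an odd homotopy on $C_i \subset S$ that either connects $C_i$ to a pair of antipodal points, or more usefully provides an odd continuous map $C_i \to S^0 = \{-1, +1\}$ when $C_i$ avoids a neighborhood allowing such a map; actually the cleanest route is: contractibility of $D_i$ in $S/\mathbb{Z}_2$ means the inclusion $D_i \hookrightarrow S/\mathbb{Z}_2$ factors up to homotopy through a point, hence the classifying map $D_i \to \mathbb{RP}^{\infty} = B\mathbb{Z}_2$ of the covering is null-homotopic, so the double cover $C_i \to D_i$ is trivial, i.e. $C_i \cong D_i \times \mathbb{Z}_2$ equivariantly. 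Projection to the $\mathbb{Z}_2$ factor gives an odd continuous map $g_i \colon C_i \to \{-1,+1\} \subset \mathbb{R}\setminus\{0\}$. Combining these via a partition-of-unity-type construction (as in the standard subadditivity proof for $\gamma$, Proposition \ref{th:propgenus}(3)), one builds an odd continuous map $A \to \mathbb{R}^m \setminus \{0\}$, giving $\gamma(A) \leq m$.

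For $\cat_{S/\mathbb{Z}_2}(A/\mathbb{Z}_2) \leq \gamma(A)$: suppose $h \colon A \to \mathbb{R}^k \setminus \{0\}$ is odd and continuous. Normalizing, $h/|h| \colon A \to S^{k-1}$ is odd, hence descends to $\bar h \colon A/\mathbb{Z}_2 \to \mathbb{RP}^{k-1}$. Cover $\mathbb{RP}^{k-1}$ by $k$ closed sets each contractible in $\mathbb{RP}^{k-1}$ — the standard cover where the $i$th set is $\{[x] : |x_i| \geq |x_j| \text{ for all } j\}$ works, each being a closed "hemisphere-like" cell. Pulling back along $\bar h$ gives a cover of $A/\mathbb{Z}_2$ by $k$ closed sets, each contractible in $\mathbb{RP}^{k-1}$, but we need contractibility in $S/\mathbb{Z}_2$, not in $\mathbb{RP}^{k-1}$. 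This is where one uses that $\bar h$ factors through $\mathbb{RP}^{k-1}$: the preimage $\bar h^{-1}(U_i)$ is contractible in $A/\mathbb{Z}_2$ composed with the contraction of $U_i$ in $\mathbb{RP}^{k-1}$ pushed forward — more precisely, the inclusion $\bar h^{-1}(U_i) \hookrightarrow S/\mathbb{Z}_2$ factors up to homotopy as $\bar h^{-1}(U_i) \to U_i \hookrightarrow \mathbb{RP}^{k-1}$, and since $U_i$ is contractible and the composite map $U_i \to \mathbb{RP}^{k-1}$ is null-homotopic (it factors through a point), the inclusion into $S/\mathbb{Z}_2$ is null-homotopic provided we route the null-homotopy correctly; this requires that $\bar h$ itself, or rather its restriction, carry the map into $S/\mathbb{Z}_2$ in a way compatible with a chosen contraction — so I would instead build the contractions directly in $S$ before quotienting: the preimage $C_i \subset A$ of $U_i$ under $h/|h|$ lands in a hemisphere of $S^{k-1}$, on which the map is homotopic through odd maps to a constant pair $\{\pm e_i\}$, and this odd homotopy takes values in all of $S$ (via the inclusion $S^{k-1} \hookrightarrow S$ after identifying $\mathbb{R}^k$ with a subspace — wait, $h$ need not relate $S^{k-1}$ to $S$ geometrically). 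The honest fix: the contraction of $U_i$ in $\mathbb{RP}^{k-1}$ gives a homotopy of $\bar h|_{\bar h^{-1}(U_i)}$ to a constant map $\bar h^{-1}(U_i) \to \mathbb{RP}^{k-1}$; but the inclusion $\bar h^{-1}(U_i) \hookrightarrow S/\mathbb{Z}_2$ need not be homotopic to $\bar h$ followed by anything. The resolution is that $S/\mathbb{Z}_2 = \mathbb{RP}^n$ and $\bar h \colon A/\mathbb{Z}_2 \to \mathbb{RP}^{k-1} \hookrightarrow \mathbb{RP}^{n}$ (after the standard inclusion of projective spaces induced by $\mathbb{R}^k \hookrightarrow \mathbb{R}^n$) is homotopic, as a map $A/\mathbb{Z}_2 \to \mathbb{RP}^n$, to the inclusion $A/\mathbb{Z}_2 \hookrightarrow \mathbb{RP}^n$ — because both are classified by the same line bundle, namely the restriction of the tautological bundle, since an odd map $A \to \mathbb{R}^k\setminus\{0\}$ is exactly a nonvanishing section of the pullback under the inclusion of the tautological bundle twisted appropriately, forcing the homotopy. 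Granting this homotopy, the cover $\{\bar h^{-1}(U_i)\}$ becomes, after the homotopy, a cover of $A/\mathbb{Z}_2$ by sets contractible in $\mathbb{RP}^n = S/\mathbb{Z}_2$, yielding $\cat_{S/\mathbb{Z}_2}(A/\mathbb{Z}_2) \leq k$.

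The main obstacle is precisely this last point: matching up "contractible in $\mathbb{RP}^{k-1}$" with "contractible in $S/\mathbb{Z}_2 = \mathbb{RP}^n$". The clean way to handle it, which I would adopt in the write-up, is to observe that the existence of an odd map $A \to \mathbb{R}^k \setminus \{0\}$ is equivalent to the inclusion $A/\mathbb{Z}_2 \hookrightarrow \mathbb{RP}^n$ being homotopic to a map that factors through $\mathbb{RP}^{k-1} \subset \mathbb{RP}^n$, and then use the standard closed contractible cover of $\mathbb{RP}^{k-1}$ by $k$ cells, whose contractions take place in $\mathbb{RP}^{k-1}$ and hence a fortiori in $\mathbb{RP}^n$. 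Combined with the homotopy invariance built into the definition of category (a set contractible in $\Phi$ after a homotopy of the inclusion is still contractible in $\Phi$), this closes the gap. I expect the forward direction $\gamma(A) \leq m$ to be the more routine one, mirroring the proof of subadditivity of the genus, while the reverse direction carries the real content.
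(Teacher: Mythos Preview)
Your approach differs from the paper's, which simply cites Rabinowitz's result $\gamma(A)=\cat_{(B\setminus\{0\})/\mathbb{Z}_2}(A/\mathbb{Z}_2)$ for compact $A$ and then identifies the two ambient categories by radially projecting contractions via $H\mapsto H/\|H\|$. You instead prove both inequalities from scratch, and your forward direction $\gamma(A)\leq m$ is fine: contractibility of $D_i$ in $S/\mathbb{Z}_2$ makes the classifying map $D_i\to\mathbb{RP}^\infty$ null-homotopic, so the double cover over $D_i$ is trivial, yielding an odd map $\pi^{-1}(D_i)\to\{\pm 1\}$, and subadditivity of the genus finishes.

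The reverse direction has a real gap. You assert that the inclusion $A/\mathbb{Z}_2\hookrightarrow S/\mathbb{Z}_2$ and the composite $A/\mathbb{Z}_2\xrightarrow{\bar h}\mathbb{RP}^{k-1}\hookrightarrow S/\mathbb{Z}_2$ are homotopic ``because both are classified by the same line bundle''. That reasoning is valid only for maps into $B\mathbb{Z}_2=\mathbb{RP}^\infty$, not into a finite-dimensional projective space: take $n=2$, $A=S=S^1$, $k=2$, and $h(z)=z^3$; the identity and $\bar h$ on $\mathbb{RP}^1\cong S^1$ both pull back the nontrivial double cover, yet they have degrees $1$ and $3$ and are not homotopic. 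So the step ``forcing the homotopy'' fails as stated. A repair that stays within your hands-on framework: from $h=(h_1,\dots,h_k)$ set $C_i=\{x\in A: |h_i(x)|\geq |h_j(x)|\text{ for all }j\}$. Each $C_i$ is closed and symmetric with $h_i\neq 0$ on it, so $\gamma(C_i)=1$ and $C_i=C_i^+\sqcup(-C_i^+)$. Since $C_i^+$ contains no antipodal pair it is a proper closed subset of $S$, hence contractible in $S$; composing that contraction with $\pi$ shows $C_i/\mathbb{Z}_2$ is contractible in $S/\mathbb{Z}_2$. These sets cover $A/\mathbb{Z}_2$, giving $\cat_{S/\mathbb{Z}_2}(A/\mathbb{Z}_2)\leq k$.
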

\begin{proof}
From \cite[Th. 3.7]{Rabinowitz1973} we know that $\gamma(A)=\cat_{(B\setminus\{0\})/\mathbb{Z}_2} (A/\mathbb{Z}_2)$ since $A$, as a closed subset of the unit sphere in the finite-dimensional Banach space $B$, is compact. What remains is to prove that for $A\in\mathcal{V}(S)$ the subspace category in $(B\setminus \{0\})/\mathbb{Z}_2$ equals the subspace category in $S/\mathbb{Z}_2$.

The inequality $\cat_{(B\setminus \{0\})/\mathbb{Z}_2} (A/\mathbb{Z}_2) \leq \cat_{S/\mathbb{Z}_2} (A/\mathbb{Z}_2)$ follows immediately from the fact that a closed, contractible covering of $A/\mathbb{Z}_2$ in $S/\mathbb{Z}_2$ remains as such when considered in $(B\setminus \{0\})/\mathbb{Z}_2$.

Now suppose $\cat_{(B\setminus \{0\})/\mathbb{Z}_2} (A/\mathbb{Z}_2)=m$. Let $\{C_1,\dotsc, C_{m}\}$ be a covering of $A/\mathbb{Z}_2$ consisting of closed and contractible sets in $(B\setminus \{0\})/\mathbb{Z}_2$. Define $V_i=C_i\bigcap S/\mathbb{Z}_2$ for each $i\in\{1,\dotsc,m\}$. All $V_i$ are closed and together they form a cover of $A/\mathbb{Z}_2$. These $V_i$ are also contractible, because if $H:C_i\times[0,1]\rightarrow(B\setminus\{0\})/\mathbb{Z}_2$ contracts $C_i$ to a point $x_0\in(B\setminus\{0\})/\mathbb{Z}_2$, then $H/\|H\|\rvert_{V_i}:V_i\times[0,1]\rightarrow S/\mathbb{Z}_2$ contracts $V_i$ to $x_0/\|x_0\|\in S/\mathbb{Z}_2$. Hence $\cat_{(B\setminus \{0\})/\mathbb{Z}_2} (A/\mathbb{Z}_2) \geq \cat_{S/\mathbb{Z}_2} (A/\mathbb{Z}_2)$.
\end{proof}

We now introduce eigenvalues in which the category plays the role of the essential dimension of subsets.

\begin{definition}
\label{def:hsk}
Let $B$ be a Banach space, let $\Phi$ be the corresponding projective space, and let $f:\Phi \rightarrow\mathbb{R}$ be a continuous function.
Then we define for $k\in\{1,\dotsc,n\}$, if B is $n$-dimensional, or for $k \in \mathbb{N}$, if $B$ is infinite-dimensional, the eigenvalues\begin{equation} 
\hs_k = \inf_{\substack{A \subset \Phi \ \mathrm{ closed} \\ \cat_\Phi(A)\geq k}} \sup_{x \in A} f(x).
\end{equation}
\end{definition}

The next proposition shows that the eigenvalues $\kras_k$ and $\hs_k$ are exactly the values at which the Krasnoselskii genus respectively the category of the sublevel sets change.

\begin{proposition} \label{pro:altcharkrhs}
Let $B$ be a Banach space, let $S$ be the corresponding unit sphere and $\Phi$ the corresponding projective space.
Let $f:S\rightarrow\mathbb{R}$ be an even and continuous function.
We have the following characterizations
\[
\kras_k = \inf\{t\in \mathbb{R}|\gamma(S_{\leq t})\geq k\}
\]
and
\[
\hs_k=\inf\{t\in \mathbb{R}|\cat_{\Phi}(\Phi_{\leq t})\geq k\}
\]
\end{proposition}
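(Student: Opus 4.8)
The plan is to prove the two characterizations by the same argument, exploiting that both $\gamma$ (on closed symmetric subsets of $S$) and $\cat_\Phi$ (on closed subsets of $\Phi$) are indices satisfying positive-definiteness, monotonicity, and subadditivity (Propositions~\ref{th:propgenus} and \ref{th:proplscat}), together with a continuity/compactness property of sublevel sets. I will write out the Krasnoselskii case; the $\hs$ case is identical after replacing $\gamma(S_{\leq t})$ by $\cat_\Phi(\Phi_{\leq t})$, using that $f$ on $\Phi$ is even-induced so that $S_{\leq t}/\mathbb{Z}_2 = \Phi_{\leq t}$ and appealing to Lemma~\ref{th:geniscat} when convenient. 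Denote $c_k := \inf\{t \in \mathbb{R} \mid \gamma(S_{\leq t}) \geq k\}$.

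First I would show $\kras_k \leq c_k$. The set $S_{\leq t}$ is closed (by continuity of $f$) and symmetric (by evenness of $f$), hence $S_{\leq t} \in \mathcal{V}(S)$. If $\gamma(S_{\leq t}) \geq k$, then $S_{\leq t}$ is an admissible competitor in the infimum defining $\kras_k$, and $\sup_{x \in S_{\leq t}} f(x) \leq t$. Taking the infimum over all such $t$ gives $\kras_k \leq c_k$.

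Next, the reverse inequality $\kras_k \geq c_k$. Let $A \in \mathcal{V}(S)$ with $\gamma(A) \geq k$, and set $s := \sup_{x \in A} f(x)$. The key point is that $A \subset S_{\leq s}$: indeed every $x \in A$ satisfies $f(x) \leq s$. By monotonicity of the genus, $\gamma(S_{\leq s}) \geq \gamma(A) \geq k$, so $s \geq c_k$. Taking the infimum over all admissible $A$ yields $\kras_k \geq c_k$. Combining, $\kras_k = c_k$. (The same two steps, with $\cat_\Phi$ in place of $\gamma$ and closed subsets of $\Phi$ in place of $\mathcal{V}(S)$, give $\hs_k = \inf\{t \mid \cat_\Phi(\Phi_{\leq t}) \geq k\}$; here one uses that $\Phi_{\leq t}$ is closed by continuity of $f:\Phi \to \mathbb{R}$, and monotonicity from Proposition~\ref{th:proplscat}.)

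I do not expect a serious obstacle: the argument is a standard minmax/index manipulation. The one subtlety to be careful about is whether the supremum $\sup_{x\in A} f(x)$ is attained (so that genuinely $A \subset S_{\leq s}$ with the non-strict inequality) — but since we only ever need $f(x) \leq s$ for all $x \in A$, which holds by definition of the supremum, attainment is not required, and closedness of $S_{\leq s}$ is all that is used. A minor bookkeeping point is the edge behaviour when the relevant set of $t$'s is empty (then $c_k = \inf \emptyset = +\infty$) or when $\gamma(S) < k$; in the finite-dimensional case Proposition~\ref{th:genusS} guarantees $\gamma(S_{\leq t}) \le n$, consistent with $k$ ranging in $\{1,\dots,n\}$, so this does not actually arise within the stated range of $k$.
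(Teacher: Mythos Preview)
Your proof is correct and follows essentially the same route as the paper's own argument: both directions are obtained by using sublevel sets as competitors and applying monotonicity of the index to the inclusion $A \subset S_{\leq \sup_A f}$, with the $\hs$ case declared analogous. The only cosmetic difference is that the paper establishes $\kras_k \leq c_k$ by taking an arbitrary $m > c_k$ and invoking monotonicity to get $\gamma(S_{\leq m}) \geq k$, whereas you work directly with $t$ in the defining set; your version is in fact slightly more direct.
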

\begin{proof}
We only show the statement for the Krasnoselskii spectrum, as the proof of the other statement is completely analogous. 
Define 
\[
m_k := \inf\{t\in \mathbb{R}|\gamma(S_{\leq t})\geq k\}
\]

Fix $k\in\{1,\dotsc,n\}$ if $B$ is $n$-dimensional or $k \in \mathbb{N}$ otherwise. We start by proving $\kras_k\leq m_k$. Let $m > m_k$. Note that the sublevel $S_{\leq m}$ is closed, symmetric and that by the monotonicity of the genus in Proposition \ref{th:propgenus} it holds that $\gamma(S_{\leq m})\geq k$. Hence \begin{equation}
\kras_k=\inf_{\substack{A \in\mathcal{V}(S) \\ \gamma(A)\geq k}} \sup_{x \in A} f(x)
\leq \sup_{x\in S_{\leq m}}f(x) \leq m
\end{equation}
Since $m > m_k$ was arbitrary, it follows that $\kras_k \leq m_k$.

Next, we show that $m_k \leq \kras_k$. Denote $\sup_{x\in A} f(x)$ by $s_A$ for all $A\in\mathcal{V}(S)$. If we take $A\in\mathcal{V}(S)$ such that $\gamma(A)\geq k$, then the monotonicity of the Krasnoselskii genus tells us that $\gamma(S_{\leq s_A})\geq \gamma(A)=k$. By definition of $m_k$, it holds that $\sup_{x\in A} f(x)=s_A\geq m_k$. By taking the infimum over all such $A$, we find that $m_k \leq \kras_k$.
\end{proof}

As a consequence of Lemma \ref{th:geniscat}, we know that for functions on finite-dimensional projective space, the Krasnoselskii eigenvalues $\kras_k$ correspond to the eigenvalues $\hs_k$.

\begin{theorem}\label{th:ksinhss}
Let $f:\rp{n}\rightarrow\mathbb{R}$ be a continuous function, then $\kras_k=\hs_k$ for all $k\in\{1,\dotsc,n+1\}$.
\end{theorem}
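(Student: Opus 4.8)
The plan is to derive Theorem~\ref{th:ksinhss} directly from the alternative characterizations in Proposition~\ref{pro:altcharkrhs} together with the identification of genus and category in Lemma~\ref{th:geniscat}. A continuous function $f:\rp{n}\to\mathbb{R}$ pulls back along the quotient map $\pi:S^n\to\rp{n}$ to an even continuous function $\tilde f := f\circ\pi$ on $S^n$, and conversely every even continuous function on $S^n$ descends to $\rp{n}$; so it suffices to show that the two spectra of $\tilde f$ and $f$ agree. By Proposition~\ref{pro:altcharkrhs} we have $\kras_k(\tilde f)=\inf\{t\mid \gamma(S^n_{\le t})\ge k\}$ and $\hs_k(f)=\inf\{t\mid \cat_{\rp{n}}(\rp{n}_{\le t})\ge k\}$, so the whole theorem reduces to the claim that $\gamma(S^n_{\le t})=\cat_{\rp{n}}(\rp{n}_{\le t})$ for every $t\in\mathbb{R}$.

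The key observation is that the sublevel sets match up under the quotient: since $\tilde f$ is even, $S^n_{\le t} = \{x : \tilde f(x)\le t\}$ is a closed symmetric subset of $S^n$, i.e. $S^n_{\le t}\in\mathcal{V}(S^n)$, and its image under $\pi$ is precisely $\rp{n}_{\le t}$, which is exactly $S^n_{\le t}/\mathbb{Z}_2$ in the notation of Lemma~\ref{th:geniscat}. Moreover $\rp{n}$ is canonically $S^n/\mathbb{Z}_2$, so Lemma~\ref{th:geniscat} applied with $A = S^n_{\le t}$ gives immediately
\[
\gamma(S^n_{\le t}) \;=\; \cat_{S^n/\mathbb{Z}_2}\!\big(S^n_{\le t}/\mathbb{Z}_2\big) \;=\; \cat_{\rp{n}}\!\big(\rp{n}_{\le t}\big).
\]
Taking the infimum over $t$ of the sets where these (equal) quantities are $\ge k$ yields $\kras_k(\tilde f)=\hs_k(f)$ for every $k$, and the admissible range $k\in\{1,\dots,n+1\}$ is exactly the one in the definitions (recalling $\rp{n}$ is the projective space of the $(n+1)$-dimensional Banach space $\mathbb{R}^{n+1}$, whose sphere is $S^n$ with $\gamma(S^n)=n+1$ by Proposition~\ref{th:genusS}).

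There is no serious obstacle here; the only point requiring a little care is the bookkeeping of dimensions — the paper parametrizes $\rp{n}$ as the projective space of an $(n+1)$-dimensional Banach space, so the index $k$ runs up to $n+1$ and Proposition~\ref{th:genusS} is invoked on $S^n\subset\mathbb{R}^{n+1}$ rather than on the sphere of an $n$-dimensional space. One should also note explicitly that Definitions of $\kras_k$ and $\hs_k$ are stated for functions on a sphere and on a projective space respectively, so the proof must start by passing between $f$ on $\rp{n}$ and $\tilde f=f\circ\pi$ on $S^n$, observing this is a bijection between continuous functions on $\rp{n}$ and even continuous functions on $S^n$ that preserves sublevel sets under $\pi$. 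With that translation in place, the argument is essentially one line of Lemma~\ref{th:geniscat} plus Proposition~\ref{pro:altcharkrhs}.
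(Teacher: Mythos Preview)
Your argument is correct and is exactly the paper's own proof: the theorem is deduced immediately from Lemma~\ref{th:geniscat} applied to the sublevel sets together with the characterizations in Proposition~\ref{pro:altcharkrhs}. You have simply spelled out the bookkeeping (the passage between $f$ on $\rp{n}$ and $\tilde f=f\circ\pi$ on $S^n$, and the dimension shift $n\leadsto n+1$) that the paper leaves implicit.
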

\begin{proof}
This follows immediately from Lemma \ref{th:geniscat} and Proposition \ref{pro:altcharkrhs}.
\end{proof}

For functions defined on infinite-dimensional projective spaces, we are not sure if the values $\kras_k$ and $\hs_k$ always agree. However, the \emph{second} eigenvalue always agrees and has the following nice characterization in terms of a min-max formula over non-contractible loops.

\begin{lemma}
\label{le:characterization-second-eigenvalue}
Let $B$ be a Banach space, let $S$ be the corresponding unit sphere and let $\Phi$ be the corresponding projective space. Let $f: \Phi \to \mathbb{R}$ be continuous.

Then
\[
\kras_2 = \hs_2 =  \inf\left\{ \sup_{s \in [0,1]} f(\ell(s))  \ | \ \ell : [0, 1] \to \Phi \text{ non-contractible loop} \right\}
\]
\end{lemma}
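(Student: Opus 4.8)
The plan is to write $\mathcal{L}$ for the infimum on the right-hand side, to read $\kras_2$ as $\kras_2$ of the even continuous function $f\circ q$ on $S$ (where $q\colon S\to\Phi$ is the canonical two-fold covering), and to deduce the two claimed equalities from the four inequalities $\hs_2\le\mathcal{L}$, $\kras_2\le\mathcal{L}$, $\mathcal{L}\le\hs_2$ and $\mathcal{L}\le\kras_2$. Throughout I will use the characterizations $\kras_2=\inf\{t:\gamma(S_{\le t})\ge2\}$ and $\hs_2=\inf\{t:\cat_\Phi(\Phi_{\le t})\ge2\}$ from Proposition \ref{pro:altcharkrhs} (with $S_{\le t}=q^{-1}(\Phi_{\le t})$), together with the elementary observation that a nonempty closed set $A\subset\Phi$ satisfies $\cat_\Phi(A)\ge2$ if and only if $A$ is not contractible in $\Phi$ (if $A\subset C$ with $C$ closed and contractible in $\Phi$, then so is $A$; conversely one may take $C=A$).

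For the two inequalities $\le\mathcal{L}$, let $\ell$ be a non-contractible loop and set $t=\sup_{s}f(\ell(s))$. The image $A=\ell([0,1])$ is compact, and $A$ is not contractible in $\Phi$, since a contraction of $A$ in $\Phi$ composed with $\ell$ would be a free null-homotopy of $\ell$; hence $\cat_\Phi(A)\ge2$ and $\hs_2\le\sup_Af=t$. For $\kras_2$, lift $\ell$ to a path $p\colon[0,1]\to S$; as $\ell$ is non-contractible, $p$ is not a loop, so $p(1)=-p(0)$. The set $K=p([0,1])\cup(-p([0,1]))$ is compact and symmetric, it lies in $S_{\le t}$ because $f\circ q$ is even, and $\gamma(K)\ge2$: an odd continuous map $K\to\mathbb{R}\setminus\{0\}$ would make its composition with $p$ change sign, contradicting the intermediate value theorem. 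By monotonicity of the genus (Proposition \ref{th:propgenus}) $\gamma(S_{\le t})\ge2$, so $\kras_2\le t$. Taking the infimum over $\ell$ yields $\hs_2\le\mathcal{L}$ and $\kras_2\le\mathcal{L}$.

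For the reverse inequalities, fix $c\in\{\hs_2,\kras_2\}$ and $t>c$, and choose $s$ with $c<s<t$. In either case $\Phi_{\le s}$ is nonempty and not contractible in $\Phi$: for $c=\hs_2$ this is the observation above, and for $c=\kras_2$ we have $\gamma(S_{\le s})\ge2$, whereas a contraction of $\Phi_{\le s}$ in $\Phi$ would, by homotopy invariance of covering spaces over the metrizable (hence paracompact) base $\Phi_{\le s}$, force the covering $S_{\le s}=q^{-1}(\Phi_{\le s})\to\Phi_{\le s}$ to be trivial, producing a section and thus an odd map $S_{\le s}\to\{\pm1\}$, i.e. $\gamma(S_{\le s})\le1$. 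Now put $O=\{x\in\Phi:f(x)<t\}$; then $\Phi_{\le s}\subset O\subset\Phi_{\le t}$, so $O$ is open in $\Phi$ and, containing the non-contractible $\Phi_{\le s}$, it is itself not contractible in $\Phi$. If the restricted two-fold covering $q^{-1}(O)\to O$ were trivial it would have a section $\sigma\colon O\to q^{-1}(O)\subset S$, and then $\iota_O=q\circ\sigma$ would factor through the inclusion of $\sigma(O)$, a proper open subset of $S$; but any proper open subset of $S$ lies in $S\setminus\{p\}$ for some $p$, which is contractible via stereographic projection, so $\iota_O$ would be null-homotopic --- contradicting that $O$ is not contractible in $\Phi$. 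Hence $q^{-1}(O)\to O$ is non-trivial, so over some path component of $O$ (which is locally path-connected, being open in the Banach manifold $\Phi$) the monodromy $\pi_1\to\mathbb{Z}_2$ is onto; a loop $\ell$ realizing the nontrivial monodromy lifts to a path from some $v$ to $-v$, so $\ell$ is a non-contractible loop with image in $O\subset\Phi_{\le t}$, and therefore $\mathcal{L}\le t$. Letting $t\downarrow c$ gives $\mathcal{L}\le\hs_2$ and $\mathcal{L}\le\kras_2$, and combining the four inequalities proves $\kras_2=\hs_2=\mathcal{L}$.

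I expect the main obstacle to be exactly this last step. The sublevel set $\Phi_{\le s}$ may be topologically wild --- it need not be locally connected, and one can arrange it to contain no non-contractible loop at all while still having $\cat_\Phi(\Phi_{\le s})\ge2$ --- so one cannot extract the required loop from it directly. The remedy is to pass to the slightly larger open set $\{f<t\}$, which inherits the local niceness of the Banach manifold $\Phi$ so that covering-space arguments apply; verifying that this open set is still non-contractible in $\Phi$, and reading the loop off from the non-triviality of its two-fold cover, is where the real work lies. The two invocations of covering-space theory --- homotopy invariance of the pulled-back cover on the one hand, and contractibility of $S\setminus\{p\}$ on the other --- together pin down precisely the equivalence between "$\gamma(S_{\le s})\ge2$", "$\cat_\Phi(\Phi_{\le s})\ge2$" and "$\Phi_{\le s}$ not contractible in $\Phi$", which in the finite-dimensional case is already contained in Theorem \ref{th:ksinhss}.
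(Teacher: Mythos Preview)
Your proof is correct. The overall architecture differs from the paper's only in organization and language, not in substance.

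The paper closes a three-term cycle $\kras_2\le\hs_2\le\mathcal{L}\le\kras_2$, importing the first inequality from Rabinowitz (genus~$1$ implies category~$1$); you instead prove all four pairwise inequalities directly. For the easy direction the arguments coincide. For the hard direction $\mathcal{L}\le\kras_2$, both proofs pass from the closed sublevel to a slightly larger \emph{open} sublevel precisely to recover local path-connectedness; the paper then argues on $S$ that some connected component of the open sublevel must be $(-1)$-invariant (otherwise an odd map to $\{\pm1\}$ exists), and extracts a path from $x_0$ to $-x_0$ inside it. Your version is the covering-space translation of the same idea: non-triviality of the restricted double cover $q^{-1}(O)\to O$ is exactly the statement that the components of $q^{-1}(O)$ are not exchanged in pairs by $-1$, and the loop with nontrivial monodromy is the image of the paper's path. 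What your phrasing buys is a uniform treatment of $\mathcal{L}\le\hs_2$ and $\mathcal{L}\le\kras_2$ via ``$\Phi_{\le s}$ not contractible in $\Phi$'', together with the pleasant observation that a section of a trivialized cover factors the inclusion through the contractible $S\setminus\{p\}$; the paper's phrasing is more elementary, avoiding any appeal to homotopy invariance of pulled-back bundles. Both are complete.

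One small remark: your invocation of ``homotopy invariance of covering spaces over the paracompact base $\Phi_{\le s}$'' is fine, but in fact unnecessary---covering maps have the homotopy lifting property with respect to \emph{all} spaces, so a null-homotopy of the inclusion lifts (starting from the identity on $S_{\le s}$) to a map $S_{\le s}\times[0,1]\to S$ ending in a two-point fiber, and composing with the sign gives your odd map directly, without any paracompactness hypothesis.
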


\begin{proof}
By the proof of Theorem 3.7 in \cite{Rabinowitz1973}, it holds that if the Krasnoselskii genus of a closed, symmetric subset of the sphere equals $1$, the category of the corresponding set in projective space equals $1$ as well. Therefore, the inequality $\kras_2 \leq \hs_2$ always holds.

We will now show that $\hs_2$ is smaller than the infimum. Let $\ell: [0,1] \to \Phi$ be a non-contractible loop.
Then the category of $\ell([0,1])$ is larger than or equal to two. Using Definition \ref{def:hsk} we find that $\hs_2$ is less than or equal to 
\[
\inf\left\{ \sup_{s \in [0,1]} f(\ell(s))  \ | \ \ell : [0, 1] \to \Phi \text{ non-contractible loop} \right\}
\]
In particular, $\kras_2$ and $\hs_2$ are finite. 

We will now show that
\begin{equation}
\label{eq:kr2-inf-ineq}
\kras_2 \geq \inf\left\{ \sup_{s \in [0,1]} f(\ell(s))  \ | \ \ell : [0, 1] \to \Phi \text{ non-contractible loop} \right\}
\end{equation}
Take $\epsilon > 0$, then $\gamma (S_{\leq \kras_2+\epsilon/2})\geq2$. Let $(A_i)_{i\in I}\subset S$ for some index set $I$ be the connected components of $S_{< \kras_2+\epsilon}$ such that $S_{< \kras_2+\epsilon}=\bigsqcup_{i\in I}A_i$. Here we denote by $\bigsqcup$ the disjoint union.

We claim that there exists an $i\in I$ such that $A_i=-A_i$. To prove this, assume instead that $A_i\neq-A_i$ for all $i\in I$. We will derive a contradiction. Since $S_{< \kras_2+\epsilon}$ is symmetric, there exist $J_1,J_2\subset I$ such that $I=J_1 \sqcup J_2$ and such that for all $i\in J_1$ there exists a $j\in J_2$ such that $A_i=-A_j$. Define the map $h:S_{< \kras_2+\epsilon}\rightarrow \mathbb{R}\setminus \{0\}$ as $h(x)=1$ for all $x \in A_i$ such that $i\in J_1$ and $h(x)=-1$ for all $x\in A_i$ such that $i \in J_2$. Then $h$ is continuous, odd and maps to $\mathbb{R}\setminus\{0\}$. Hence $\gamma(S_{\leq \kras_2+\epsilon/2}) = 1$, which is in contradiction with $\gamma(S_{\leq \kras_2+\epsilon/2})\geq 2$. We conclude that there is an index $i\in I$ such that $A_i=-A_i$. We denote $A := A_i$. 

As $A$ is open and connected, it is also path-connected. 
Hence $A$ must contain for some $x_0\in A$ a path $P:[0,1]\rightarrow A$ with $P(0)=x_0$ and $P(1)=-x_0$. This path induces a loop $\ell:[0,1]\rightarrow\Phi_{\leq\kras_2+\epsilon}$ which is not null-homotopic in $\Phi$. Therefore
\[
\kras_2 + \epsilon \geq \inf\left\{ \sup_{s \in [0,1]} f(\ell(s))  \ | \ \ell : [0, 1] \to \Phi \text{ non-contractible loop} \right\}\]
and inequality (\ref{eq:kr2-inf-ineq}) follows as $\epsilon$ was arbitrary.
\end{proof}

\section{Inclusion of the Krasnoselskii spectrum}
To relate the Krasnoselskii spectrum to the homotopy significant spectrum we first relate the existence of a homotopy between sublevels to their category.
 
\begin{lemma}\label{th:homocategory}
Let $\Phi$ be a topological space and $X,Y\subset \Phi$. If there exists a homotopy $H:X\times[0,1]\rightarrow\Phi$ with $H(x,0)=\mathbbm{1}_X(x)$ for all $x\in X$ and $H(X,1)\subset Y$, then $\cat_\Phi (X) \leq \cat_\Phi (Y)$.
\end{lemma}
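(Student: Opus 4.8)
The plan is to show that a contractible cover of $Y$ in $\Phi$ can be pulled back through the homotopy $H$ to a contractible cover of $X$ in $\Phi$ of the same size. Assume $\cat_\Phi(Y) = m < \infty$ (if $m = \infty$ there is nothing to prove), and let $C_1, \dots, C_m$ be closed subsets of $\Phi$ covering $Y$, each contractible in $\Phi$. The natural candidate cover of $X$ is given by the preimages $D_i := \{ x \in X \mid H(x,1) \in C_i \} = (H(\cdot,1))^{-1}(C_i)$, where $H(\cdot, 1) : X \to \Phi$ is the continuous time-$1$ map. Since $H(X,1) \subset Y \subset \bigcup_i C_i$, the sets $D_i$ cover $X$; since each $C_i$ is closed and $H(\cdot,1)$ is continuous, each $D_i$ is closed in $X$.

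The remaining point is that each $D_i$ is contractible \emph{in $\Phi$}. Here I would use the homotopy $H$ itself to first slide $D_i$ into $C_i$: the restriction $H|_{D_i \times [0,1]} : D_i \times [0,1] \to \Phi$ is a homotopy in $\Phi$ from the inclusion $D_i \hookrightarrow \Phi$ to the map $x \mapsto H(x,1)$, which by construction lands in $C_i$. Then, composing with a contraction $K_i : C_i \times [0,1] \to \Phi$ of $C_i$ to a point in $\Phi$, and concatenating the two homotopies (reparametrising time so the first runs on $[0,1/2]$ and the second on $[1/2,1]$), one obtains a homotopy in $\Phi$ from $D_i \hookrightarrow \Phi$ to a constant map. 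Hence $D_i$ is contractible in $\Phi$, so $\{D_1, \dots, D_m\}$ is an admissible cover of $X$ and $\cat_\Phi(X) \leq m = \cat_\Phi(Y)$.

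There is one subtlety worth being careful about: the notion of ``contractible in $\Phi$'' used in the definition of $\cat_\Phi$ refers to a closed subset $A \subset \Phi$, and $D_i$ is defined as a subset of $X$, so I should make sure $D_i$ is regarded as a subset of $\Phi$ (which it is, since $X \subset \Phi$) and that it is closed there — closedness of $D_i$ in $\Phi$ follows if $X$ itself is closed in $\Phi$, or more carefully one works with the closures, but in the intended application $X = \Phi_{\leq a}$ is a sublevel set of a continuous function and hence closed. I expect this bookkeeping about in which space the sets are closed and contractible to be the only real point requiring attention; the construction of the cover and its contractibility via concatenation of homotopies is otherwise routine.

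One more remark on concatenation: to glue $H|_{D_i}$ and $K_i$ into a single continuous homotopy one uses the pasting lemma on the closed cover $D_i \times [0,1/2]$ and $D_i \times [1/2,1]$ of $D_i \times [0,1]$, which is valid since the two pieces agree on the overlap $D_i \times \{1/2\}$ (both equal $x \mapsto H(x,1)$ there, after reparametrisation). This yields the desired contraction and completes the argument.
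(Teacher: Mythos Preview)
Your argument is correct and is essentially identical to the paper's own proof: both pull back a closed contractible cover $\{C_i\}$ of $Y$ through the time-$1$ map $H(\cdot,1)$ to get a cover of $X$, and show each preimage is contractible in $\Phi$ by first running $H$ and then contracting inside $C_i$. Your extra care about closedness in $\Phi$ versus in $X$, and the explicit pasting-lemma justification for concatenation, go beyond what the paper spells out but do not change the approach.
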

\begin{proof}
Assume that $\cat_\Phi (Y)=m$ and let $\{C_1,\dotsc,C_{m}\}$ be a collection of closed and contractible sets in $\Phi$ which cover $Y$. Define $V_i=H^{-1}(C_i,1)$ for each $i\in\{1,\dotsc,m\}$. All $V_i$ are then closed and together they form a cover of $X$. Furthermore, the $V_i$ are contractible because we can first apply the homotopy $H$ to $V_i$ and then use the contractibility of $C_i$. Hence $\cat_\Phi (X) \leq m$.
\end{proof}

\begin{corollary}\label{th:homocategoryeq}
Let $\Phi$ be a topological space and $Y\subset X\subset \Phi$. If there exists a homotopy $H:X\times[0,1]\rightarrow\Phi$ with $H(x,0)=\mathbbm{1}_X(x)$ for all $x\in X$ and $H(X,1)\subset Y$, then $\cat_\Phi (X) = \cat_\Phi (Y)$.
\end{corollary}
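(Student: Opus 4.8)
The plan is to obtain the claimed equality by combining the two inequalities $\cat_\Phi(X) \leq \cat_\Phi(Y)$ and $\cat_\Phi(Y) \leq \cat_\Phi(X)$.

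For the first inequality, I would simply invoke Lemma \ref{th:homocategory} directly: the hypotheses of the corollary are exactly the hypotheses of that lemma (a homotopy $H : X \times [0,1] \to \Phi$ with $H(\cdot,0) = \mathbbm{1}_X$ and $H(X,1) \subset Y$), so the lemma immediately yields $\cat_\Phi(X) \leq \cat_\Phi(Y)$.

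For the reverse inequality, I would use the additional assumption $Y \subset X$, which is present in the corollary but not in the lemma. By the monotonicity property of the subspace category (Proposition \ref{th:proplscat}, item 2), the inclusion $Y \subset X$ gives $\cat_\Phi(Y) \leq \cat_\Phi(X)$.

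Putting the two inequalities together yields $\cat_\Phi(X) = \cat_\Phi(Y)$. I do not anticipate any real obstacle here: the statement is a direct corollary, and the only thing to check is that the extra hypothesis $Y \subset X$ is what allows monotonicity to be applied in the second step, whereas the first step is just a restatement of the preceding lemma.
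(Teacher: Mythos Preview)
Your proposal is correct and is exactly the intended argument: the paper states this as a corollary without proof, leaving implicit that one inequality is Lemma~\ref{th:homocategory} verbatim and the other is monotonicity of the category (Proposition~\ref{th:proplscat}) applied to $Y\subset X$.
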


This corollary tells us in the context of two sublevels of a continuous function, that if we can bring the larger sublevel to the smaller sublevel via a homotopy, that these sublevels must be of the same category. More important for us is however the negative statement. This tells us that whenever the category of the sublevels increases we have encountered an eigenvalue in the homotopy significant spectrum. Hence the  eigenvalues $\hs_k$ are homotopy significant. 

\begin{corollary} \label{pr:hscat}
Let $B$ be a Banach space, let $\Phi$ the corresponding projective space and let $f : \Phi \to \mathbb{R}$ be a continuous function. Then the eigenvalues $\hs_k\in\mathbb{R}$ are homotopy significant.
\end{corollary}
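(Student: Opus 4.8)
The plan is to argue by contradiction, using the characterisation of $\hs_k$ as the value at which the category of the sublevel sets jumps. Write $a := \hs_k$. By Proposition~\ref{pro:altcharkrhs} (applied to the even lift of $f$ to the unit sphere $S$) we have $a = \inf\{t \in \mathbb{R} \mid \cat_\Phi(\Phi_{\leq t}) \geq k\}$, so that $\cat_\Phi(\Phi_{\leq t}) < k$ for all $t < a$ while $\cat_\Phi(\Phi_{\leq t}) \geq k$ for all $t > a$ (by monotonicity), i.e.\ the category genuinely jumps at $a$. Assume now that $a$ is \emph{not} homotopy significant: there is a homotopy $H : \Phi_{\leq a} \times [0,1] \to \Phi$ with $H(\cdot, 0) = \mathbbm{1}_{\Phi_{\leq a}}$ and $H(\Phi_{\leq a}, 1) \subset \Phi_{<a}$. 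The goal is to derive a contradiction.

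The first and main step is to show that the category does not drop in the limit from above: $\cat_\Phi(\Phi_{\leq a}) \geq k$. I would obtain this from the corresponding property of the Krasnoselskii genus. Lift $f$ to the even continuous function $\tilde f$ on $S$; then $S_{\leq a} := \{u \in S \mid \tilde f(u) \leq a\}$ is compact (as $B$ is finite dimensional), nonempty (since $a = \hs_k$ with $k \geq 1$) and avoids $0$. Proposition~\ref{th:propgenus}(4) produces an open neighbourhood $N$ of $S_{\leq a}$ in $B$ with $\overline{N}$ closed and symmetric and $\gamma(\overline{N}) = \gamma(S_{\leq a})$. Because $S \setminus N$ is compact and $\tilde f > a$ on it, $\tilde f$ attains there a minimum $t_0 > a$, so $S_{\leq t} \subset \overline{N}$ whenever $a < t < t_0$; monotonicity of the genus gives $\gamma(S_{\leq a}) = \gamma(\overline{N}) \geq \gamma(S_{\leq t}) \geq k$, the last inequality because $\kras_k = \hs_k = a$ and $\kras_k = \inf\{t \mid \gamma(S_{\leq t}) \geq k\}$ (Theorem~\ref{th:ksinhss} and Proposition~\ref{pro:altcharkrhs}). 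Finally Lemma~\ref{th:geniscat} identifies $\gamma(S_{\leq a})$ with $\cat_\Phi(\Phi_{\leq a})$, giving $\cat_\Phi(\Phi_{\leq a}) \geq k$.

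For the second step I would use compactness again to trap the image of the homotopy inside a single strictly lower sublevel set. Since $\Phi_{\leq a}$ is closed in the compact space $\Phi \cong \rp{n}$ it is compact, hence so is $K := H(\Phi_{\leq a}, 1)$. As $K \subset \Phi_{<a} = \bigcup_{s < a} \Phi_{<s}$ with each $\Phi_{<s}$ open, a finite subcover yields $K \subset \Phi_{<t} \subset \Phi_{\leq t}$ for $t$ the maximum of finitely many thresholds $< a$. Applying Lemma~\ref{th:homocategory} with $X = \Phi_{\leq a}$ and $Y = \Phi_{\leq t}$ (note $H(X,1) = K \subset Y$) then gives $\cat_\Phi(\Phi_{\leq a}) \leq \cat_\Phi(\Phi_{\leq t}) < k$, since $t < a = \hs_k$. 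This contradicts the first step, so no such homotopy $H$ exists and $\hs_k$ is homotopy significant.

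The hard part is the first step, $\cat_\Phi(\Phi_{\leq a}) \geq k$: it is the only non-formal ingredient and the only place where finite dimensionality (compactness of $S$ and of $\Phi$) is essential, entering through the ``continuity from above'' of the genus in Proposition~\ref{th:propgenus}(4). Alternatively one could prove $\cat_\Phi(\Phi_{\leq a}) \geq k$ directly on $\rp{n}$ by thickening a contractible closed cover, using that $\rp{n}$ is a compact ANR, but routing through the genus keeps the argument inside the machinery already developed in this section. In an infinite-dimensional $B$ the same scheme would additionally require a compactness hypothesis on the sublevel sets, which lies outside the present scope.
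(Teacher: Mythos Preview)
Your proof is correct and is a careful fleshing-out of what the paper only sketches. The paper simply remarks, after Corollary~\ref{th:homocategoryeq}, that in contrapositive form a value at which the category of the sublevel sets increases must be homotopy significant, and then records the corollary without further argument. You correctly identify that this sketch hides two non-formal points: that the jump actually occurs \emph{at} $a=\hs_k$, i.e.\ $\cat_\Phi(\Phi_{\leq a})\geq k$ (your Step~1, via the continuity-from-above of the genus in Proposition~\ref{th:propgenus}(4) together with Lemma~\ref{th:geniscat}), and that the homotopy image lands in some closed sublevel $\Phi_{\leq t}$ with $t<a$, so that Lemma~\ref{th:homocategory} can be applied to a set whose category is certainly $<k$ (your Step~2). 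Both steps rely on compactness, and the paper leaves both implicit.

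The one discrepancy is in scope: the corollary is stated for an arbitrary Banach space, whereas your argument is explicitly finite-dimensional, as you honestly flag in your closing paragraph. The paper gives no indication of how to obtain $\cat_\Phi(\Phi_{\leq \hs_k})\geq k$ without compactness, so your restriction is not a defect of your proof so much as a subtlety the paper does not address. Since the only downstream use of this corollary (Corollary~\ref{co:ksinhss}) is already in the finite-dimensional setting, the restriction is harmless for the paper's main results.
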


In the case of a finite-dimensional Banach space, we can now conclude that the Krasnoselskii spectrum is contained in the homotopy significant spectrum.

\begin{corollary}\label{co:ksinhss}
Let $B$ be an $n$-dimensional Banach space, let $S$ be the corresponding unit sphere and let $f:S\rightarrow\mathbb{R}$ be a continuous and even function. Then the Krasnoselskii eigenvalues $\kras_k\in\mathbb{R}$ are homotopy significant.
\end{corollary}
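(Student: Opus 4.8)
The plan is to obtain the statement by combining the two results just established, Theorem~\ref{th:ksinhss} and Corollary~\ref{pr:hscat}, after descending $f$ to projective space. First I would note that, since $f\colon S\to\mathbb{R}$ is even and continuous and $B$ is $n$-dimensional so that $S\cong S^{n-1}$, the function $f$ is constant on the fibers of the quotient map $S\to S/\mathbb{Z}_2=\Phi\cong\rp{n-1}$ and hence descends to a continuous function $\bar f\colon\Phi\to\mathbb{R}$. The sublevel sets are compatible under this quotient, $\Phi_{\leq t}=S_{\leq t}/\mathbb{Z}_2$ and $\Phi_{<t}=S_{<t}/\mathbb{Z}_2$, so the Krasnoselskii spectrum of $f$ and the $\hs$-spectrum of $\bar f$ are exactly the two spectra compared in Theorem~\ref{th:ksinhss} (up to the harmless relabeling replacing $n$ by $n-1$, so that $k$ runs over $\{1,\dots,n\}$).

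Next I would invoke Theorem~\ref{th:ksinhss} — equivalently, Lemma~\ref{th:geniscat} together with Proposition~\ref{pro:altcharkrhs}, which give $\kras_k=\inf\{t\mid\gamma(S_{\leq t})\geq k\}=\inf\{t\mid\cat_\Phi(\Phi_{\leq t})\geq k\}=\hs_k$ — to conclude that $\kras_k=\hs_k$ for every admissible $k$, the $\hs_k$ here being computed for $\bar f$ on $\Phi$. Finally, Corollary~\ref{pr:hscat}, applied to the continuous function $\bar f$ on the projective space $\Phi$, states precisely that each $\hs_k\in\mathbb{R}$ is homotopy significant; since $\kras_k=\hs_k$, the value $\kras_k$ is homotopy significant for $\bar f$ as well, which is the assertion.

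The only thing to be careful about is bookkeeping rather than mathematics: one must check that the descent of $f$ to $\Phi$ genuinely identifies the sublevel sets appearing in the two definitions and that the index ranges match, and one should keep in mind that ``homotopy significant'' in the statement refers to the homotopy significant spectrum of the induced function on projective space, i.e.\ the ambient space $\Phi$ in the defining homotopy $H$ is this projective space. There is no analytic or topological obstacle beyond what is already packaged into Theorem~\ref{th:ksinhss} and Corollary~\ref{pr:hscat}; in particular, the subtlety that $\kras_k$ need not be ``attained'' as a value at which the category of the sublevel sets jumps is already absorbed into the proof of Corollary~\ref{pr:hscat} via Lemma~\ref{th:homocategory} and Corollary~\ref{th:homocategoryeq}.
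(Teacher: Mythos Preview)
Your proof is correct and follows exactly the paper's approach: the paper's proof is the single sentence ``This follows immediately from Theorem~\ref{th:ksinhss} and Corollary~\ref{pr:hscat},'' and your write-up simply unpacks this, including the descent of $f$ to projective space and the index bookkeeping $n\mapsto n-1$. There is nothing to add or correct.
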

\begin{proof}
This follows immediately from Theorem \ref{th:ksinhss} and Corollary \ref{pr:hscat}.
\end{proof}

\section{Equality of the spectra}
So far we have shown that the Krasnoselskii spectrum is contained in the homotopy significant spectrum for all functions defined on $\rp{n}$. The follow-up question is then if there are values of $n$ and functions $f$ for which the Krasnoselskii spectrum is not only contained in, but equals the homotopy significant spectrum. Note that homotopy significant eigenvalues which are not in the Krasnoselskii spectrum can only occur in the interval $(\hs_1,\hs_{n+1})$, as $\hs_1=\inf_{x\in\rp{n}}f(x)$ and $\hs_{n+1}=\sup_{x\in\rp{n}}f(x)$. Using the contractibility of sublevels of category 1, we prove that $\hs_1$ is the only homotopy significant eigenvalue below $\hs_2$ for all $n$. This immediately shows the equality of the spectra for $n=1$.

\begin{theorem}\label{th:belowm2}
Let $B$ be a Banach space, let $\Phi$ be the corresponding projective space, and let $f:\Phi \rightarrow\mathbb{R}$ be a continuous function.
There is nothing in the homotopy significant spectrum of $f$ besides $\hs_1$ below $\hs_2$.
\end{theorem}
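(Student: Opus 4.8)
The plan is to show directly that no value $a$ with $a < \hs_2$ and $a \neq \hs_1$ lies in the homotopy significant spectrum; to do so I will, for each such $a$, construct a homotopy $H \colon \Phi_{\leq a} \times [0,1] \to \Phi$ with $H(\,\cdot\,,0) = \mathbbm{1}_{\Phi_{\leq a}}$ and $H(\Phi_{\leq a},1) \subset \Phi_{<a}$, which is exactly what it means for $a$ not to be homotopy significant. We may assume $\dim B \geq 2$, since otherwise $\Phi$ is a single point and $\hs_2$ is not defined; under this assumption $\Phi$ is path-connected. Recall that $\hs_1 = \inf_\Phi f$ and that $\hs_1 \leq \hs_2$, so the values $a$ to be handled split into the two cases $a < \hs_1$ and $\hs_1 < a < \hs_2$. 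The first case is immediate: if $a < \hs_1 = \inf_\Phi f$ then $f(x) > a$ for every $x$, hence $\Phi_{\leq a} = \emptyset$ and the empty map vacuously provides the required homotopy.

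For the main case $\hs_1 < a < \hs_2$, I would first invoke Proposition \ref{pro:altcharkrhs}: from $a < \hs_2$ we obtain $\cat_\Phi(\Phi_{\leq a}) \leq 1$, while $a > \hs_1 = \inf_\Phi f$ forces $\Phi_{<a}$, and a fortiori $\Phi_{\leq a}$, to be nonempty, so $\cat_\Phi(\Phi_{\leq a}) = 1$. By the definition of the subspace category this means $\Phi_{\leq a}$ is contained in a single closed set $C \subset \Phi$ that is contractible in $\Phi$; restricting a contraction of $C$ to $\Phi_{\leq a}$ yields a homotopy $G \colon \Phi_{\leq a} \times [0,1] \to \Phi$ with $G(\,\cdot\,,0) = \mathbbm{1}_{\Phi_{\leq a}}$ and $G(\,\cdot\,,1) \equiv p$ for some $p \in \Phi$. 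Since $a > \inf_\Phi f$, choose $q \in \Phi_{<a}$, and since $\Phi$ is path-connected, choose a path $\sigma \colon [0,1] \to \Phi$ with $\sigma(0)=p$ and $\sigma(1)=q$. Concatenating $G$ with $\sigma$ viewed as a homotopy constant in the space variable, i.e.\ $H(x,t) = G(x,2t)$ for $t \leq \tfrac12$ and $H(x,t) = \sigma(2t-1)$ for $t \geq \tfrac12$, gives by the pasting lemma a continuous homotopy with $H(\,\cdot\,,0) = \mathbbm{1}_{\Phi_{\leq a}}$ and $H(\Phi_{\leq a},1) = \{q\} \subset \Phi_{<a}$. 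Hence $a$ is not homotopy significant, which is the claim.

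The only step carrying any content is the passage from $\cat_\Phi(\Phi_{\leq a}) = 1$ to an \emph{explicit} deformation landing inside $\Phi_{<a}$: contractibility in $\Phi$ a priori supplies only a contraction to \emph{some} point of $\Phi$, and one still has to slide that point into $\Phi_{<a}$, which is precisely where path-connectedness of $\Phi$ and the nonemptiness of $\Phi_{<a}$ (itself a consequence of $a > \hs_1$) are used. Everything else — closedness of the sublevel sets of a continuous function, the pasting lemma, the identity $\hs_1 = \inf_\Phi f$, the monotonicity $\hs_1 \leq \hs_2$, and the check that the two displayed cases together with $a = \hs_1$ exhaust all $a < \hs_2$ with $a \neq \hs_1$ — is routine. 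I do not expect a genuine obstacle: the proof is short because Proposition \ref{pro:altcharkrhs} already encodes the relevant interaction between $f$ and the category of its sublevel sets.
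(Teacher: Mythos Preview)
Your proof is correct and follows essentially the same approach as the paper: both arguments use $\cat_\Phi(\Phi_{\leq a})=1$ from Proposition~\ref{pro:altcharkrhs} to contract the sublevel set to a point, and then invoke path-connectedness of $\Phi$ to move that point into $\Phi_{<a}$. You are simply a bit more explicit than the paper in treating the trivial case $a<\hs_1$, in spelling out the concatenation with a path via the pasting lemma, and in noting the implicit hypothesis $\dim B\geq 2$ needed for path-connectedness.
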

\begin{proof}
Assume without loss of generality that $\hs_1\neq\hs_2$, otherwise the claim follows immediately. Take $t\in(\hs_1,\hs_2)$. By definition of both $\hs_1$ and $\hs_2$ we have that $\cat_{\Phi} (\Phi_{\leq t})=1$. Hence a set $A\subset\Phi$ exists such that $A$ is closed, contractible and $\Phi_{\leq t}\subset A$. In particular a homotopy $H:\Phi_{\leq t}\times[0,1] \rightarrow \Phi$ exists such that $H(x,0)=\mathbbm{1}_A(x)$ and $H(x,1)=x_0\in\Phi$. Since $\Phi$ is path connected we can assume that $x_0\in\Phi_{<t}$. Hence $t$ cannot be homotopy significant, so $\hs_1$ is the only eigenvalue in the homotopy significant spectrum below $\hs_2$. 
\end{proof}

\begin{corollary}\label{th:allgromov1}
Let $f:\rp1\rightarrow\mathbb{R}$ be a continuous function. The eigenvalues $\hs_1$ and $\hs_2$ form the homotopy significant spectrum of $f$.
\end{corollary}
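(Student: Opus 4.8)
The plan is to deduce the corollary directly from Theorem~\ref{th:belowm2} together with the elementary observation that for $\Phi = \rp{1}$ there are no homotopy significant eigenvalues strictly above $\hs_2$. First I would note that by Theorem~\ref{th:ksinhss} the homotopy significant eigenvalues $\hs_1,\hs_2$ coincide with the Krasnoselskii eigenvalues $\kras_1,\kras_2$, and by the remark following Proposition~\ref{pro:altcharkrhs} we have $\hs_1 = \inf_{x \in \rp{1}} f(x)$ and $\hs_2 = \sup_{x \in \rp{1}} f(x)$, since $\rp{1}$ is the projective space of a $2$-dimensional Banach space, so $n+1 = 2$. Thus $\hs_1$ and $\hs_2$ are the global minimum and maximum of $f$, and every value in the homotopy significant spectrum must lie in the closed interval $[\hs_1, \hs_2]$: indeed, for $t < \hs_1$ the sublevel set $\Phi_{\leq t}$ is empty, so the constant empty homotopy witnesses that $t$ is not significant, and for $t \geq \hs_2$ we have $\Phi_{\leq t} = \Phi$, but more to the point $t > \hs_2$ is vacuous and $t = \hs_2$ is simply the top eigenvalue we are keeping.

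Next I would invoke Theorem~\ref{th:belowm2} with $\Phi = \rp{1}$: it states that below $\hs_2$ the only homotopy significant eigenvalue is $\hs_1$. Combining this with the previous paragraph, the homotopy significant spectrum is contained in $\{\hs_1\} \cup [\hs_2, \hs_2] = \{\hs_1, \hs_2\}$. The reverse inclusion is handed to us for free: by Corollary~\ref{pr:hscat} (or directly by Corollary~\ref{co:ksinhss}), both $\hs_1$ and $\hs_2$ are homotopy significant. Hence the homotopy significant spectrum of $f$ equals exactly $\{\hs_1, \hs_2\}$.

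The only point requiring a sliver of care — and the closest thing to an obstacle — is the edge case $\hs_1 = \hs_2$, i.e.\ when $f$ is constant on $\rp{1}$: then the spectrum is the singleton $\{\hs_1\}$, which one should interpret as the claim still holding with a degenerate (single-point) spectrum, and Theorem~\ref{th:belowm2} already handles this by its opening "without loss of generality" reduction. A second, even milder subtlety is making sure the interval $(\hs_1,\hs_2)$ is genuinely the whole gap that Theorem~\ref{th:belowm2} rules out and that nothing can hide at or above $\hs_2$; this is exactly why I first pin down $\hs_2$ as the global supremum of $f$. Everything else is bookkeeping, so the write-up will be short: state the two identifications of $\hs_1,\hs_2$, quote Theorem~\ref{th:belowm2} and Corollary~\ref{pr:hscat}, and conclude.
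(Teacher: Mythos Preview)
Your proposal is correct and is exactly the argument the paper has in mind: the corollary is left unproven because it follows immediately from Theorem~\ref{th:belowm2} once one notes (as the paper does in the paragraph preceding that theorem) that $\hs_1=\inf f$ and $\hs_{n+1}=\sup f$, together with Corollary~\ref{pr:hscat} for the reverse inclusion. One trivial citation slip: the identification $\kras_1=\inf f$, $\kras_{n+1}=\sup f$ is the remark following the \emph{definition} of the Krasnoselskii spectrum, not Proposition~\ref{pro:altcharkrhs}.
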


For functions defined on $\rp2$ proving a similar statement requires more work, but also in this case the Krasnoselskii spectrum equals the homotopy significant spectrum. 

\begin{theorem}\label{th:allgromov2}
Let $f:\rp2\rightarrow\mathbb{R}$ a continuous function. The eigenvalues $\hs_1$, $\hs_2$ and $\hs_3$ form the homotopy significant spectrum of $f$.
\end{theorem}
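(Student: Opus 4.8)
The plan is to reduce the statement to a single remaining interval and then close it by an explicit ambient homotopy exploiting that $\rp2$ minus a disk is a M\"obius band. By Corollary \ref{pr:hscat} the three values $\hs_1,\hs_2,\hs_3$ are homotopy significant, so what is left is to show that \emph{no other} real number is. Since $\rp2$ is compact, $\hs_1=\inf_{\rp2}f=\min f$ and $\hs_3=\sup_{\rp2}f=\max f$; hence for $a<\hs_1$ the sublevel set $\Phi_{\le a}$ is empty and for $a>\hs_3$ one has $\Phi_{\le a}=\Phi_{<a}=\rp2$, and in both cases a (trivial) homotopy of the required type exists, so such $a$ are not homotopy significant. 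Theorem \ref{th:belowm2} removes the interval $(\hs_1,\hs_2)$. Thus the whole theorem reduces to the claim that for every $t\in(\hs_2,\hs_3)$ there is a homotopy $H\colon\Phi_{\le t}\times[0,1]\to\rp2$ with $H(\cdot,0)=\mathbbm{1}_{\Phi_{\le t}}$ and $H(\Phi_{\le t},1)\subset\Phi_{<t}$.

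Fix such a $t$ and write $X:=\Phi_{\le t}$. I would record two facts about $t$. First, since $t<\hs_3=\max f$, the set $X$ is a proper closed subset of $\rp2$: picking a point $p$ with $f(p)>t$ and a small closed disk $D\ni p$ disjoint from $X$, one gets $X\subset M:=\rp2\setminus\operatorname{int}D$, and $M$ is a M\"obius band. Second, since $t>\hs_2$, the min-max characterization of $\hs_2$ over non-contractible loops in Lemma \ref{le:characterization-second-eigenvalue} produces a non-contractible loop $\ell_*\colon S^1\to\rp2$ with $\sup f\circ\ell_*<t$, so that its image $C_*$ lies inside $\Phi_{<t}$.

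The homotopy $H$ is then built in two stages. Stage one: a M\"obius band strong-deformation-retracts onto an embedded core circle, so let $r\colon M\times[0,1]\to M$ be such a retraction onto $C_0\subset M$; since $C_0$ generates $\pi_1(\rp2)$ it is non-contractible in $\rp2$. Restricting $r$ to $X$ gives a homotopy in $\rp2$ from the inclusion $X\hookrightarrow\rp2$ to the map $\phi:=r(\cdot,1)\colon X\to C_0$. Stage two: fix a homeomorphism $c_0\colon S^1\to C_0$ and set $\widetilde\phi:=c_0^{-1}\circ\phi\colon X\to S^1$. Because $\pi_1(\rp2)=\mathbb Z/2$ is abelian, all non-contractible loops are freely homotopic, so there is a homotopy $G\colon S^1\times[0,1]\to\rp2$ from $c_0$ to $\ell_*$; then $(x,s)\mapsto G(\widetilde\phi(x),s)$ is a homotopy from $\phi$ to the map $x\mapsto\ell_*(\widetilde\phi(x))$, whose image lies in $C_*\subset\Phi_{<t}$. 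Concatenating the two stages yields the required $H$, so $t$ is not homotopy significant, which completes the proof.

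The main obstacle is precisely the passage, in Stages one and two, from the soft information available about $t$ --- namely that $\Phi_{\le t}$ misses a point of $\rp2$ and that $\Phi_{<t}$ contains \emph{some} non-contractible loop --- to an honest ambient deformation, bearing in mind that the sublevel set $\Phi_{\le t}$ and the loop $\ell_*$ may be topologically wild. The two-stage device is designed exactly to avoid any regularity hypothesis: one first retracts $X$ onto a \emph{tame} embedded circle $C_0$, and then transports $C_0$ onto the possibly wild loop $C_*$ using the free homotopy $G$ precomposed with the merely continuous map $\widetilde\phi$, so nothing smooth or cellular is ever needed. Beyond this, I would check the degenerate cases $\hs_1=\hs_2$ and $\hs_2=\hs_3$, but there the relevant open interval is empty and nothing is required.
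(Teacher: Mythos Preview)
Your proof is correct and follows essentially the same route as the paper's: reduce to $t\in(\hs_2,\hs_3)$, use that the sublevel misses a point to retract onto a circle, and then use $\pi_1(\rp2)=\mathbb{Z}/2$ to carry that circle to a non-contractible loop in the strict sublevel. The only cosmetic difference is that you work directly in $\rp2$ via the M\"obius band $\rp2\setminus\operatorname{int}D$ and its core circle, whereas the paper lifts to $S^2$, squashes the symmetric sublevel onto an equator by an odd homotopy, and then descends; these are two descriptions of the same deformation.
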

\begin{proof}
Assume without loss of generality that $\hs_1\neq\hs_2\neq\hs_3$. It follows from Lemma \ref{le:characterization-second-eigenvalue} that every sublevel $\rp2_{\leq\hs_2+\epsilon}$ for $0<\epsilon<\hs_3-\hs_2$ contains a loop $\ell:[0,1] \to \rp2_{\leq\hs_2+\epsilon}$ which is not null-homotopic in $\rp2$.

If $t\in \mathbb{R}\setminus\{\hs_1,\hs_2,\hs_3\}$ is a homotopy significant eigenvalue, it must satisfy $\hs_2<t<\hs_3$ by Theorem \ref{th:belowm2}. Therefore fix $t\in (\hs_2,\hs_3)$ and subsequently $\epsilon\in\mathbb{R}$ such that $0<\epsilon<(t-\hs_2)/2$. We will show that we can bring the sublevel $\rp2_{\leq t}$ to the path $\ell$ in $\rp2_{\leq\hs_2+\epsilon}$, and therefore it cannot be homotopy significant.

Since $S^2_{\leq t}$ does not cover all of $S^2$ by Proposition \ref{th:genusS}, it can be squashed with a odd homotopy to an equator. This homotopy induces a homotopy on $\rp2$. Moreover, as the fundamental group of $\rp2$ is $\mathbb{Z}_2$, the image of the equator is homotopic to $\ell$. Composing the two homotopies, we get a homotopy that brings $\rp2_{\leq t}$ inside $\rp2_{\leq \hs_2 + \epsilon}$.
\end{proof}

\section{Inequality of the spectra}
\label{se:inequality-spectra}
We have now proven that for functions defined on the real projective line or plane, the Krasnoselskii spectrum equals the homotopy significant spectrum. It turns out that this result cannot be extended to the real projective space $\rp3$. To show this we will construct a function and some value $t\in\mathbb{R}$ for which both the $\leq t$-sublevel and the $<t$-sublevel have category 3, but the $\leq t$-sublevel cannot be brought to the $<t$-sublevel.

Specifically, we will construct a function in such a way that its $\leq t$-sublevel contains $\rp2$ and its $<t$-sublevel deformation retracts onto the connected sum of $\rp2$ and the torus $T$, denoted as $\rp{2}\# T$ and sometimes called Dyck's surface, see also Figure \ref{fig:D3}. The idea behind this choice is that $\rp2$ can only be included in a tubular neighbourhood of $\rp{2}\# T$ if this neighbourhood is large enough.

\begin{figure}[h]
\centering
\includegraphics[width=0.3\textwidth]{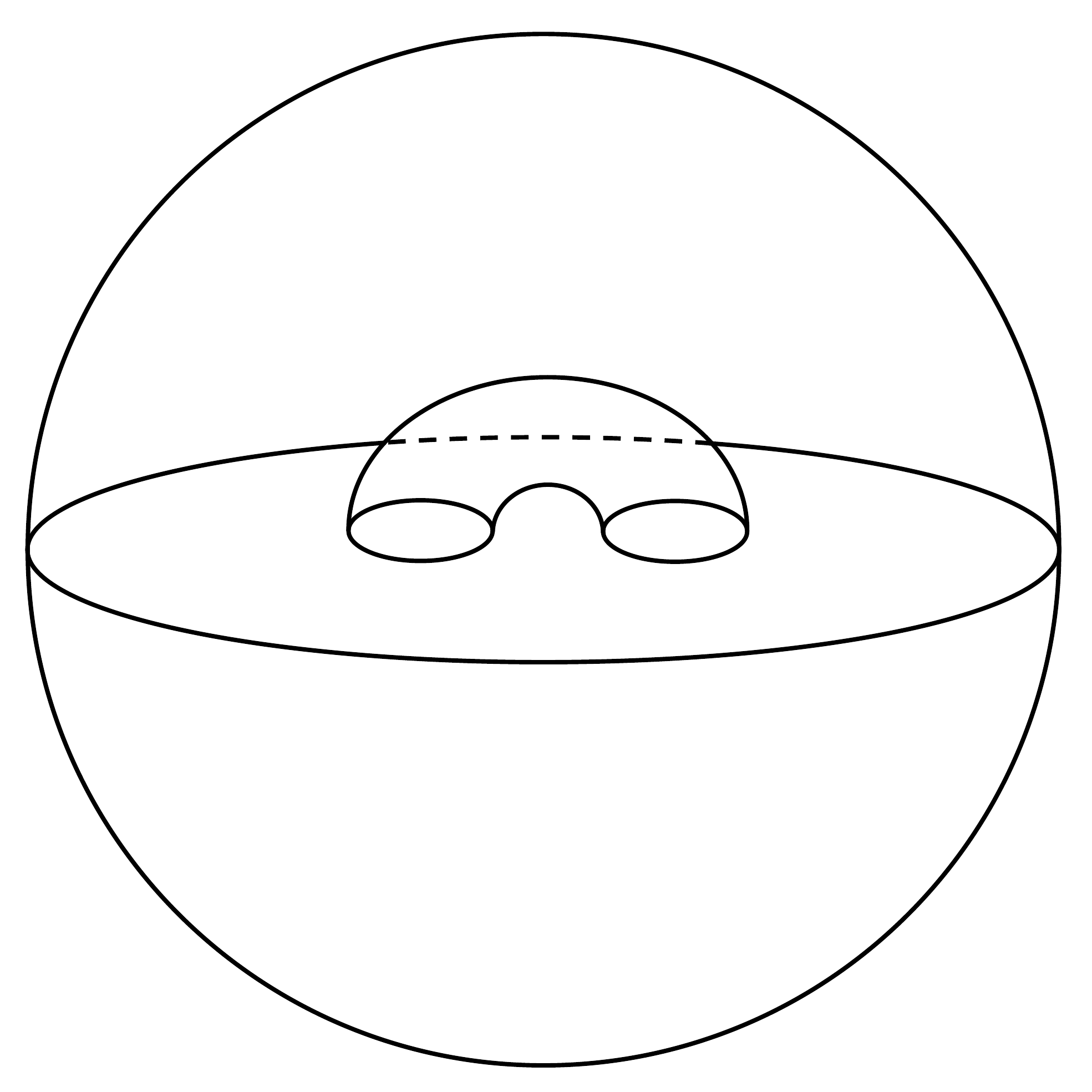}
\caption{$D^3/\sim$ with in the center $\rp{2}\#T$} \label{fig:D3}
\end{figure}

To show that we can indeed use this idea, we will first prove that the category of $\rp{2}\# T$ is 3. Following, we will prove that $\rp2$ cannot be brought to $\rp{2}\# T$. Throughout the remainder we will identify $\rp3$ with $D^3/\sim$ where $D^3=\{(x,y,z)\in\mathbb{R}^3|x^2+y^2+z^2\leq 1\}$ and the equivalence relation $\sim$ identifies antipodes on the boundary of $D^3$.

\begin{lemma}$\cat_{\rp{3}}(\rp{2}\#T)=3$. \label{th:catrpt3}
\end{lemma}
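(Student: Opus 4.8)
The plan is to establish the two inequalities $\cat_{\rp3}(\rp2\# T)\le 3$ and $\cat_{\rp3}(\rp2\# T)\ge 3$ separately. For the upper bound, I would exhibit an explicit cover of $\rp2\# T$ by three closed sets that are each contractible \emph{in $\rp3$}. Here it is essential to remember that subspace category only requires contractibility in the ambient space, not in the subspace, so there is a lot of room: any subset of $\rp2\#T$ that is contained in an embedded disk of $\rp3$, or more generally is null-homotopic in $\rp3$, counts as one piece. Since $\rp2\#T$ is a closed surface, it has a CW structure (or a handle/triangulation picture) with a standard decomposition into the $2$-cell, the $1$-skeleton, and a neighbourhood structure; thickening appropriately, one gets three closed pieces: roughly, a closed disk around the $2$-cell, and two closed neighbourhoods covering the $1$-skeleton (a wedge of circles), each of which can be taken to be a neighbourhood of an arc or a contractible-in-$\rp3$ graph. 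One then checks each piece bounds a singular disk in $\rp3$ — for the pieces meeting the embedded $\rp2\subset\rp2\#T$ one uses that $\pi_1(\rp3)=\mathbb Z_2$ and $\pi_2(\rp3)=\mathbb Z$, so the relevant loops and the $\rp2$-class may still be killed once we are allowed to move into the $3$-dimensional ambient space; this needs a small argument but should go through.

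For the lower bound $\cat_{\rp3}(\rp2\# T)\ge 3$, I would use the standard cup-length obstruction: if $\cat_\Phi(A)\le m$ then any product of $m$ cohomology classes in $\ker(H^*(\Phi)\to H^0)$, restricted suitably, vanishes — more precisely, for subspace category one uses the relative cup-length, that a cover by $m$ sets contractible in $\Phi$ forces the image of products of $m$ positive-degree classes from $H^*(\Phi)$ in $H^*(A)$ to vanish. So I would take the generator $w\in H^1(\rp3;\mathbb Z_2)$, pull it back to $\bar w\in H^1(\rp2\#T;\mathbb Z_2)$ via the inclusion $\rp2\#T\hookrightarrow\rp3$, and show $\bar w^{\,2}\ne 0$ in $H^2(\rp2\#T;\mathbb Z_2)$. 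If $\bar w^2\ne0$ then the inclusion-induced map kills no such product of length $2$, which gives $\cat_{\rp3}(\rp2\#T)\ge 3$. Computing $H^*(\rp2\#T;\mathbb Z_2)$ is routine (it is a closed nonorientable surface of nonorientable genus $3$, so $H^1=\mathbb Z_2^3$ and the mod-$2$ intersection form is nondegenerate), and the point is to identify which class $\bar w$ is: since $\rp2\#T\hookrightarrow\rp3$ and $\rp2\hookrightarrow\rp3$ is the standard embedding carrying the fundamental class of $\rp2$ to the generator of $H_2(\rp3;\mathbb Z_2)$, the class $\bar w$ is the one dual to the embedded $\rp2$, and the self-intersection of the embedded $\rp2$ in the surface is odd (it is the "$\rp2$ summand"), so $\bar w^2\ne 0$.

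The main obstacle I expect is the \emph{upper} bound, specifically verifying that the three closed pieces are genuinely contractible \emph{in $\rp3$} rather than merely in themselves. The delicate piece is the one containing (a neighbourhood of) the embedded $\rp2\subset\rp2\#T$: inside the surface this carries nontrivial $\pi_1$ and nontrivial $H_2$, so one must genuinely use the extra room in $\rp3$. Concretely I would arrange the cover so that no single piece contains all of the $\rp2$-cell together with a full non-contractible loop in a way that cannot be undone; instead I would cut along a curve so that each piece is a surface-with-boundary that either embeds in a ball of $\rp3$ or is a collar of a graph whose every loop is null-homotopic in $\rp3$ (using $\pi_1(\rp3)=\mathbb Z_2$ to kill the $2$-torsion loops by a disk on the $\rp2$ side). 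Making this decomposition precise and drawing the explicit three sets — essentially the content of Figure \ref{fig:D3} — is where the real work lies; once the sets are pinned down, checking closedness and that the cover is a cover is immediate, and contractibility-in-$\rp3$ of each is a finite case check.
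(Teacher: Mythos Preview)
Your plan is valid but takes a different route from the paper. For the upper bound you propose an explicit three-set cover and flag it as the main obstacle; in fact this bound is immediate from tools already available: $\rp2\#T$ is a proper closed subset of $\rp3$, so its preimage in $S^3$ is a proper closed symmetric subset with Krasnoselskii genus at most $3$ by Proposition~\ref{th:genusS}, whence $\cat_{\rp3}(\rp2\#T)\le 3$ by Lemma~\ref{th:geniscat}. No cover needs to be written down. For the lower bound you invoke the cup-length obstruction, pulling back the generator $w\in H^1(\rp3;\mathbb{Z}_2)$ and showing $(i^*w)^2\ne 0$ in $H^2(\rp2\#T;\mathbb{Z}_2)$; this is correct and clean. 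The paper instead obtains the lower bound geometrically: it shows that $\rp3\setminus T_\epsilon(\rp2\#T)$ is contractible (hence of category $1$), and then subadditivity (Proposition~\ref{th:proplscat}) together with $\cat_{\rp3}(\rp3)=4$ forces $\cat_{\rp3}\big(\overline{T_\epsilon(\rp2\#T)}\big)\ge 3$, which passes to $\rp2\#T$ via Corollary~\ref{th:homocategoryeq}. The paper's route stays entirely within the genus/category framework already built and avoids any cohomology, at the cost of a hands-on analysis of the complement in the $D^3/{\sim}$ model; your cup-length route is more algebraic and would transfer to other embeddings without examining the complement, but it imports relative cup-product machinery that the paper has not set up.
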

\begin{proof}
We first explain why it is enough to show that the complement of a small neighborhood of $\rp{2}\# T$ is contractible. 
Define for $\epsilon>0$ the tubular neighbourhood of $\rp{2}\# T$ in $\rp3$ as $T_\epsilon(\rp{2}\# T)=\{x\in\rp3|\dist(x,\rp{2}\# T)<\epsilon\}$. By Corollary \ref{th:homocategoryeq}, there exists an $\epsilon>0$ such that $\cat_{\rp{3}}\left(\overline{T_\epsilon(\rp{2}\#T)}\right)=\cat_{\rp{3}}(\rp{2}\#T)$. Then by the subadditivity of the category (see Proposition \ref{th:proplscat}), it holds that
\begin{equation}
\cat_{\rp{3}}(\rp{3})\leq \cat_{\rp{3}} \left(\overline{T_\epsilon(\rp{2}\#T)}\right) +\cat_{\rp{3}}(\rp{3}\setminus T_\epsilon(\rp{2}\#T)).
\end{equation}
By Proposition \ref{th:genusS} and Lemma \ref{th:geniscat} it holds that $\cat_{\rp{3}}(\rp{3})=4$. It then follows by again Proposition \ref{th:genusS} that $\cat_{\rp{3}} \left(\overline{T_\epsilon(\rp{2}\#T)}\right)\leq3$ as $\rp{2}\#T$ is a proper subset of $\rp{3}$. Hence, if we want to prove that $\cat_{\rp{3}} \left(\rp{2}\#T)\right) = \cat_{\rp{3}} \left(\overline{T_\epsilon(\rp{2}\#T)}\right) =  3$, it suffices to show that $\cat_{\rp{3}}(\rp{3}\setminus T_\epsilon(\rp{2}\#T))=1$. Therefore we show that $\rp{3}\setminus T_\epsilon(\rp{2}\#T)$ is contractible.

If we choose $\epsilon$ small enough, the subset of $D^3$ induced by $\rp3\setminus T_\epsilon(\rp{2}\# T)$ consists of two connected components: one containing the point $(0,0,1)$ and the other containing $(0,0,-1)$. 
We can make a strong deformation retract of the first connected component to the upper half sphere, and of the second component to the lower half sphere. We compose this homotopy by a symmetric homotopy that brings the upper half sphere (minus a neighborhood of the equator) to the point $(0,0,1)$ and the lower half sphere to the point $(0,0,-1)$. This second homotopy is continuous away from the equator. Combining the two homotopies and passing to projective space, we find that $\rp{3}\setminus T_\epsilon(\rp{2}\# T)$ is contractible.
\end{proof}

This implies that a sublevel which deformation retracts onto $\rp{2}\# T$, for instance a small enough tubular neighbourhood, also has category 3. We are now able to prove that $\rp2$ cannot be brought to $\rp{2}\# T$.

\begin{lemma}\label{th:rp2torp2t}
There does not exist a homotopy $H:\rp{2}\times[0,1]\rightarrow\rp{3}$ such that $H(x,0)=\mathbbm{1}_{\rp{2}}(x)$ for all $x\in\rp{2}$ and $H(\rp{2},1)\subset\rp{2}\#T$.
\end{lemma}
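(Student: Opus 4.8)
The plan is to argue by contradiction, using the fundamental group; the obstruction is purely homotopy-theoretic. It is worth noting first that the (closed) Lusternik--Schnirelmann category cannot by itself detect this, since $\cat_{\rp3}(\rp2)=3$ (by Proposition \ref{th:genusS} and Lemma \ref{th:geniscat}) and $\cat_{\rp3}(\rp{2}\#T)=3$ (by Lemma \ref{th:catrpt3}), so Lemma \ref{th:homocategory} yields no contradiction; one really has to use a finer invariant.

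So, assume such an $H$ exists. I would set $g:=H(\cdot,1)$, which by hypothesis corestricts to a continuous map $g\colon\rp2\to\rp{2}\#T$, and write $j\colon\rp{2}\#T\hookrightarrow\rp3$ and $\iota\colon\rp2\hookrightarrow\rp3$ for the inclusions. Then $H$ is a free homotopy from $\iota$ to $j\circ g$ inside $\rp3$. Next I would record two facts. (i) The inclusion $\iota$ is not null-homotopic: the generator of $\pi_1(\rp2)$ stays non-contractible in $\rp3$, so $\iota_*\colon\pi_1(\rp2)=\mathbb{Z}_2\to\pi_1(\rp3)=\mathbb{Z}_2$ is an isomorphism. (ii) $\rp{2}\#T\cong\rp2\#\rp2\#\rp2$ is a closed non-orientable surface of genus $3$, hence aspherical (its universal cover is $\mathbb{R}^2$), and $\pi_1(\rp{2}\#T)$ is torsion-free, because a nontrivial finite group cannot act freely on $\mathbb{R}^2$ (equivalently, a group with torsion admits no finite-dimensional $K(\pi,1)$).

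From (ii) it follows that every continuous map $\rp2\to\rp{2}\#T$ is null-homotopic: maps from a CW complex to an aspherical space are classified up to free homotopy by the conjugacy class of the induced homomorphism on $\pi_1$, and $\operatorname{Hom}(\mathbb{Z}_2,\pi_1(\rp{2}\#T))$ is trivial since the target is torsion-free, while the constant map realises the trivial homomorphism. Hence $g$, and therefore $j\circ g$, is null-homotopic in $\rp3$; combined with (i) this contradicts $\iota\simeq j\circ g$, which proves the lemma.

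I expect the only non-formal ingredient to be fact (ii) — the asphericity and torsion-freeness of higher-genus surface groups — while everything else is standard. If one prefers to avoid it, there are two alternatives. One can restrict $g$ to $\rp1\subset\rp2$, observe $g|_{\rp1}$ is null-homotopic because $g_*$ kills $\pi_1(\rp2)$, and extend the contraction over the top cell using $\pi_2(\rp{2}\#T)=0$. Or one can argue cohomologically: fact (i) upgrades to $\iota^*$ being an isomorphism on $H^\bullet(-;\mathbb{Z}_2)$ in degrees $\le 2$ (since $w^2\neq 0$ in $H^2(\rp3;\mathbb{Z}_2)$ restricts to the generator of $H^2(\rp2;\mathbb{Z}_2)$), which forces $g^*$ to be an isomorphism on top $\mathbb{Z}_2$-cohomology, and then Poincar\'e duality over $\mathbb{Z}_2$ forces $g^*$ to be injective on $H^1(-;\mathbb{Z}_2)$ — impossible, as $\dim_{\mathbb{Z}_2}H^1(\rp{2}\#T;\mathbb{Z}_2)=3>1=\dim_{\mathbb{Z}_2}H^1(\rp2;\mathbb{Z}_2)$.
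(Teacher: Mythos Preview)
Your proof is correct and takes a genuinely different route from the paper's. The paper argues via $H_2(-;\mathbb{Z}_2)$: it observes that the inclusion $i\colon\rp2\hookrightarrow\rp3$ induces an isomorphism on $H_2(-;\mathbb{Z}_2)$, so the hypothetical map $G_1\colon\rp2\to\rp2\#T$ would have $\mathbb{Z}_2$-degree~$1$, and then invokes an external result (that a closed surface does not admit a nonzero-degree map to a closed surface of strictly larger non-orientable genus) to obtain the contradiction. Your main argument instead stays entirely in $\pi_1$: you use that $\rp2\#T$ is aspherical with torsion-free fundamental group, so every map $\rp2\to\rp2\#T$ is null-homotopic, which immediately contradicts $\iota\simeq j\circ g$. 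This is more self-contained than the paper's proof, since the asphericity and torsion-freeness of higher-genus surface groups are standard and easy to justify, whereas the paper outsources the crux to a cited lemma. Your cohomological alternative (forcing $g^*$ to be injective on $H^1(-;\mathbb{Z}_2)$ via Poincar\'e duality once it is an isomorphism on $H^2$) is essentially the paper's $\mathbb{Z}_2$-degree argument stated dually, with the cited surface lemma unpacked on the spot; it is arguably the cleanest of the three routes.
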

\begin{proof}
Let $i:\rp{2}\hookrightarrow\rp{3}$ and $j:\rp{2}\#T\hookrightarrow\rp{3}$ denote the inclusions of $\rp{2}$ and $\rp{2}\#T$ into $\rp{3}$, respectively. Assume a homotopy $G:\rp{2}\times[0,1]\rightarrow\rp{3}$ exists such that $G(x,0)=\mathbbm{1}_{\rp{2}}(x)$ for all $x\in\rp{2}$ and $G(\rp{2},1)\subset \rp{2}\#T$, then we will derive a contradiction. We denote the homotopy here with $G$ to avoid confusion with the homology groups.

Define $G_0:\rp{2}\rightarrow\rp{2}$ as $G_0(x)=G(x,0)$ for all $x\in\rp{2}$ and $G_1:\rp{2}\rightarrow\rp{2}\#T$ as $G_1(x)=G(x,1)$ for all $x\in\rp{2}$. Then since $i\circ G_0$ and $j\circ G_1$ are homotopic, it holds for the induced functions on the homology classes that $i_*=i_*G_{0*}=j_* G_{1*}$. By cellular homology, the function $i_*:H_2(\rp{2};\mathbb{Z}_2) \rightarrow H_2(\rp{3};\mathbb{Z}_2)$ is an isomorphism. Hence the map $G_1$ must be of $\mathbb{Z}_2$-degree $1$. However, $\rp2\#T$ has a (non-orientable rather than Krasnoselskii) genus strictly larger than $\rp2$ and so $G_{1}$ cannot exist (see for instance Lemma 8 in \cite{Burton-Finding-2017}). Therefore, the homotopy $G$ does not exist. 
\end{proof}

Our goal was to show that there exist functions on $\rp{3}$ of which the homotopy significant spectrum is larger than their Krasnoselskii spectrum. At this point, we have two spaces $\rp{2}$ and $\rp{2}\# T$ of category 3 and we know that we cannot bring the first into the second using a homotopy. Hence it remains to construct a function which has the two subspaces as sublevels. We will start by constructing a continuous function for which this holds.

\begin{theorem}\label{th:hsslargerks}
There exists a continuous function $f:\rp{3}\rightarrow\mathbb{R}$ such that its homotopy significant spectrum is strictly larger than its Krasnoselskii spectrum.
\end{theorem}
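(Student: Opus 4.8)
The plan is to build $f$ explicitly on the model $\rp3 = D^3/\!\sim$ so that it has $\rp2$ (the central equatorial disk, i.e. the image of the equatorial plane $\{z=0\}$) as its $\leq t$-sublevel for some value $t$, and a tubular neighbourhood that deformation retracts onto $\rp2\#T$ as its $<t$-sublevel. The natural candidate is a suitably normalized (signed) distance function to $\rp2\#T$. First I would fix, inside the tubular neighbourhood $T_\epsilon(\rp2\#T)$ of Lemma~\ref{th:catrpt3}, an embedded copy of $\rp2$ — for instance $\rp2$ sitting as the $z=0$ slice of $D^3/\!\sim$, arranging the embedding of $\rp2\#T$ so that it contains this $\rp2$ in a controlled way, with the extra handle of the torus summand attached in a region where $|z|$ is bounded away from $0$. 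Then set
\[
f(x) = \max\bigl(\dist(x,\rp2\#T),\ g(x)\bigr)
\]
or some smooth interpolation, where $g$ is chosen so that on the $\rp2$-slice $f$ equals exactly the target value $t$, while on all of $\rp2\#T$, $f$ takes the value $0$, and $f$ is continuous and even-descended-to-projective everywhere. A cleaner route: take $f$ to be a continuous function that is $0$ on $\rp2\#T$, grows with distance up to the value $t$ exactly on the embedded $\rp2$, and then keeps increasing toward its maximum on the two "caps" of $D^3/\!\sim$ near $(0,0,\pm1)$.

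The key steps, in order, would be: (1) produce the embedded $\rp2 \subset T_\epsilon(\rp2\#T) \subset \rp3$ together with an explicit description of $T_\epsilon(\rp2\#T)$, using that the latter deformation retracts onto $\rp2\#T$ and its complement is a pair of balls (from the proof of Lemma~\ref{th:catrpt3}); (2) define $f$ so that $\{f \le 0\} = \rp2\#T$, hence $\cat_{\rp3}(\{f\le 0\}) = 3$ by Lemma~\ref{th:catrpt3}; (3) check that for a small $t>0$, $\{f < t\}$ is (up to the retract) still $\rp2\#T$, so $\cat_{\rp3}(\{f<t\}) = 3$ as well, while $\{f \le t\}$ contains the embedded $\rp2$; (4) apply Theorem~\ref{th:ksinhss} / Proposition~\ref{pro:altcharkrhs}: since the category of the sublevel does not jump at $t$ (it is $3$ just below and, if $f$ is designed so $\{f\le t\}$ also has category $3$, also at $t$), the value $t$ is \emph{not} in the Krasnoselskii spectrum $\{\kras_k\}$; (5) show $t$ \emph{is} in the homotopy significant spectrum by invoking Lemma~\ref{th:homocategory} in contrapositive together with Lemma~\ref{th:rp2torp2t}: any homotopy $H:\{f\le t\}\times[0,1]\to\rp3$ with $H(\cdot,0)=\mathrm{id}$ and $H(\{f\le t\},1)\subset\{f<t\}$ would restrict to a homotopy from the embedded $\rp2 \subset \{f\le t\}$ into $\{f<t\}$, which deformation retracts onto $\rp2\#T$; composing with the retract yields a homotopy forbidden by Lemma~\ref{th:rp2torp2t}, a contradiction.

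I expect the main obstacle to be step (3) together with the bookkeeping in step (1): one must arrange the geometry so that $\{f\le t\}$ has category exactly $3$ (not $4$!) — it must remain a proper subset of $\rp3$, so the two polar caps near $(0,0,\pm1)$ must lie in $\{f>t\}$ — while simultaneously $\{f\le t\}$ contains the full embedded $\rp2$ and $\{f<t\}$ still retracts onto $\rp2\#T$. In particular $\{f<t\}$ must not already contain that $\rp2$ in a way that makes the obstruction vanish; the handle of the torus must genuinely separate the distance scales so that the $\rp2$-slice sits at distance exactly $t$ from $\rp2\#T$ and not less. Making $f$ honestly continuous (and well-defined on projective space, i.e. even on $S^3$) across the seam where the distance function meets the caps is a routine but delicate gluing. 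Everything else is a direct citation of the lemmas already proved: Lemma~\ref{th:catrpt3} for the category count, Lemma~\ref{th:rp2torp2t} for the non-existence of the homotopy, and Proposition~\ref{pro:altcharkrhs} / Theorem~\ref{th:ksinhss} to read off that $t$ lies in $\hs$ but not in $\kras$.
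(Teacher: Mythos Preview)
Your approach is essentially the paper's: the paper takes simply $f(x)=\dist(x,\rp2\#T)$ with the torus parameters chosen so that $R>3r$, which already guarantees that $\{f\le t\}$ deformation retracts onto $\rp2\#T$ for every $t\in[0,r)$ while $\rp2\subset\{f\le r\}$, so your auxiliary function $g$ and the gluing worries are unnecessary; the Krasnoselskii side is then just the observation $\hs_3=0<r<\sup f=\hs_4$. One caution about your step~(1): you cannot place $\rp2$ inside a tubular neighbourhood of $\rp2\#T$ that itself retracts onto $\rp2\#T$ --- that would immediately contradict Lemma~\ref{th:rp2torp2t} --- so the correct picture (which you do articulate later) is that $\rp2$ lies in the \emph{closed} sublevel $\{f\le r\}$ but not in the open one.
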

\begin{proof}
\begin{figure}[h]
\centering
\includegraphics[width=0.5\textwidth]{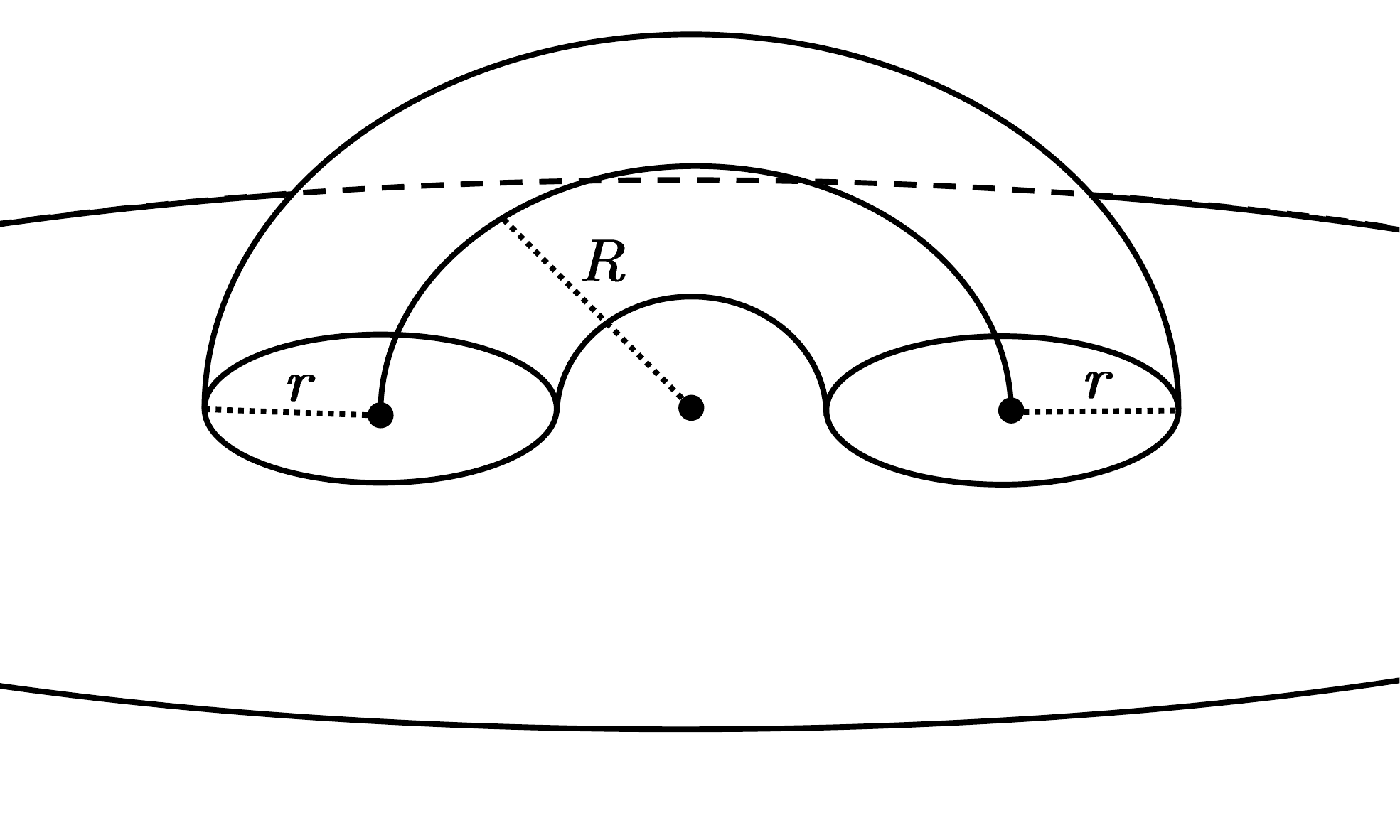}
\caption{Zoom-in view of where the torus $T$ is attached to $\rp2$ in $\rp{2}\# T$, with the radii 
$r$ and $R$ of the torus indicated} \label{fig:D3r}
\end{figure}
Consider the space $\rp{2}\# T$. Let $r>0$ denote the radius of the tube in the torus $T$ and let $R$ denote the radius from the center of the torus $T$ to the center of the torus tube as in Figure \ref{fig:D3r}. Take $R>3r$ and define the function $f:\rp{3}\rightarrow\mathbb{R}$ as $f(x)=\dist(x,\rp{2}\#T)$ for all $x\in\rp{3}$. Then we claim that the homotopy significant spectrum of $f$ contains an eigenvalue that is not contained in the Krasnoselskii spectrum of $f$. 

By the choice of $R$, the sublevel $\rp3_{\leq t}$ deformation retracts to $\rp{2}\#T=\rp3_{\leq 0}$ for all $t\in[0,r)$. However, since $\rp2\subset\rp3_{\leq r}$, Theorem \ref{th:rp2torp2t} tells us that a homotopy which brings $\rp3_{\leq r}$ to $\rp3_{<r}$ does not exist. Hence $r$ lies in the homotopy significant spectrum. However, since $\hs_3=0<r<\sup_{x\in\rp3}f(x)=\hs_4$, the value $r$ is not an element of the Krasnoselskii spectrum. 
\end{proof}

In this proof we had to make sure we could point out the exact value where the sublevel changed from something which deformation retracted onto $\rp{2}\# T$ to something which contained $\rp2$. One could think of cases where such a value cannot exactly be identified, but only two sublevels can be found where the larger one cannot be brought into the smaller one. The following lemma shows that in the case of Morse functions this is enough to detect the passing of an eigenvalue.

\begin{lemma} \label{th:hssintervalm}
Let $M$ be a smooth manifold and $F:M\rightarrow\mathbb{R}$ a Morse function. If for some $s,t\in\mathbb{R}$, where $s<t$, the sublevel $M_{\leq t}$ cannot be brought to $M_{\leq s}$ via a homotopy, the interval $(s,t]$ contains a homotopy significant eigenvalue. 
\end{lemma}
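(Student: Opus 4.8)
The plan is to argue by contraposition: suppose the half-open interval $(s,t]$ contains no homotopy significant eigenvalue; I will then build a homotopy bringing $M_{\leq t}$ into $M_{\leq s}$, contradicting the hypothesis. The key tool is Morse theory, which tells us exactly when the homotopy type of a sublevel set can change. First I would recall that since $F$ is Morse, it has only finitely many critical points in the compact preimage $F^{-1}([s,t])$ (or, if $M$ is noncompact, I would add the standing assumption that $F$ is proper, or restrict attention to a suitable compact piece; the paper's examples all live on compact $\rp{n}$, so this is harmless). Call the critical values in $(s,t]$, if any, $c_1 < c_2 < \dots < c_m$. Between consecutive critical values, the standard deformation lemma of Morse theory gives that $M_{\leq b}$ deformation retracts onto $M_{\leq a}$ whenever $[a,b]$ contains no critical value; in particular such a retraction is in particular a homotopy of the kind appearing in the definition of the homotopy significant spectrum, so none of those regular values is an obstruction.

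The heart of the argument is what happens at each critical value $c_i$. The claim is: if $c_i$ is \emph{not} homotopy significant, then $M_{\leq c_i}$ can be brought into $M_{\leq c_i - \delta}$ for small $\delta > 0$ by a homotopy in $M$ fixing nothing in particular but starting at the identity. Indeed, "not homotopy significant" means precisely that there is a homotopy $H : M_{\leq c_i} \times [0,1] \to M$ with $H(\cdot,0) = \mathbbm{1}$ and $H(M_{\leq c_i},1) \subset M_{< c_i}$. Since the critical points are isolated and $F$ is continuous, $M_{< c_i} = \bigcup_{\delta > 0} M_{\leq c_i - \delta}$ and, using compactness of $H(M_{\leq c_i},1)$ (again invoking compactness of $M$ or properness of $F$), its image already lies in some $M_{\leq c_i - \delta}$. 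Composing this with the deformation retraction of $M_{\leq c_i - \delta/?}$ down to $M_{\leq c_{i-1}}$ (or down to $M_{\leq s}$ if $i = 1$) through the regular region $(c_{i-1}, c_i)$, and then composing successively over $i = m, m-1, \dots, 1$, I obtain a single homotopy $M_{\leq t} \to M$ starting at the identity whose final image lies in $M_{\leq s}$. This contradicts the hypothesis that $M_{\leq t}$ cannot be brought to $M_{\leq s}$. Hence at least one $c_i \in (s,t]$ must be homotopy significant.

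Two points need care, and I expect the second to be the main obstacle. The first, minor, point is the bookkeeping of composing homotopies defined on shrinking domains: one wants to be sure that after pushing $M_{\leq c_i}$ into $M_{\leq c_i-\delta}$ the subsequent homotopies are only ever applied to points that lie in their domains, which is automatic since each stage lands inside the sublevel where the next stage is defined, and one can reparametrize the time intervals to concatenate. The second, and genuinely delicate, point is the passage "$H(M_{\leq c_i},1) \subset M_{<c_i}$ implies $H(M_{\leq c_i},1) \subset M_{\leq c_i - \delta}$ for some $\delta>0$": this is where compactness (of $M$, or of the relevant sublevel sets via properness of $F$) is essential, and it is worth stating the hypothesis on $M$ or $F$ that makes it valid. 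Once that is in place, Morse theory handles the regular intervals and the argument closes. I would also remark that the lemma does not pin down \emph{which} $c_i$ is significant, only that one of them is — consistent with the discussion preceding the lemma, where for general (non-Morse) $f$ one cannot localize the eigenvalue to a point.
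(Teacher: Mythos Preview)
Your argument is correct and follows the same route as the paper's own proof: enumerate the critical values $c_1 < \dots < c_m$ in $(s,t]$, use the Morse deformation lemma on the regular intervals, use non-significance at each $c_i$ to push $M_{\leq c_i}$ into $M_{< c_i}$, and compose everything into a single homotopy from $M_{\leq t}$ to $M_{\leq s}$. The paper's version is terser and leaves implicit the compactness step you flag (passing from $M_{< c_i}$ to some $M_{\leq c_i - \delta}$), but the structure is identical.
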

\begin{proof}
Let $c_1,\dotsc,c_m$ for some $m\in\mathbb{N}$ be all critical values of $F$ satisfying $s< c_1<c_2<\dotsc<c_m\leq t$. Since $F$ is Morse we can bring $M_{< c_1}$ to $M_{\leq s}$, and $M_{<c_{i+1}}$ to $M_{\leq c_i}$ for all $i\in\{1,\dotsc,m-1\}$. If $t\neq c_m$, we can bring $M_{\leq t}$ to $M_{\leq c_m}$ \cite[p. 14-20]{MilnorMorse}. It follows that one of the $c_i$ is homotopy significant.
\end{proof}
Note that variations of Lemma \ref{th:hssintervalm} can be proven similarly. In particular, as we will need this later, we would like to remark that if $M_{\leq t}$ cannot be brought to $M_{<s}$, the interval $[s,t]$ contains a homotopy significant eigenvalue.

\section{Stability}
We are not aware of a result on the stability of the homotopy significant spectrum for continuous functions. However, if we limit ourselves to Morse functions such a result follows directly.

\begin{lemma} \label{th:hssstable}
Let $M$ be a smooth manifold and $f:M\rightarrow\mathbb{R}$ a continuous real-valued function with a homotopy significant eigenvalue $e\in\mathbb{R}$. If $F:M\rightarrow\mathbb{R}$ is a Morse function and there exists a $\delta>0$ such that $\sup_{x\in M}|f(x)-F(x)|<\delta$, then $F$ has a homotopy significant eigenvalue $\widetilde{e}$ with $|e-\widetilde{e}|\leq\delta$.
\end{lemma}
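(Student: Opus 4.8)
The statement compares a continuous $f$ with a nearby Morse function $F$ (within $\delta$ in sup norm) and asserts that a homotopy significant eigenvalue $e$ of $f$ forces a homotopy significant eigenvalue of $F$ within $\delta$. The natural strategy is to play the sublevel sets of $f$ and $F$ off each other. Since $\sup_{x}|f(x)-F(x)|<\delta$, we have the chain of inclusions
\[
M_{\leq e - \delta}^{F} \ \subset\ M_{< e}^{f}\ \subset\ M_{\leq e}^{f}\ \subset\ M_{< e+\delta}^{F}\ \subset\ M_{\leq e+\delta}^{F},
\]
where superscripts indicate which function defines the sublevel. (I am writing $M_{\leq t}^{g} := \{x : g(x)\leq t\}$ etc.) The idea is then: if $F$ had \emph{no} homotopy significant eigenvalue in $[e-\delta, e+\delta]$, one could, using Lemma \ref{th:hssintervalm} (and the remark following it about $[s,t]$), bring $M_{\leq e+\delta}^{F}$ down to $M_{\leq e-\delta}^{F}$ via a homotopy in $M$ — more precisely, the absence of critical values in that interval means the larger $F$-sublevel deformation retracts into the smaller one. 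Composing with the above inclusions would then yield a homotopy bringing $M_{\leq e}^{f}$ into $M_{< e}^{f}$ inside $M$, contradicting that $e$ is homotopy significant for $f$.

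\medskip

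\noindent\textbf{Key steps, in order.} First I would fix the inclusion chain above and observe that all maps involved are just inclusions, hence trivially continuous and well-behaved. Second, I would argue by contradiction: suppose $F$ has no homotopy significant eigenvalue in $[e-\delta, e+\delta]$. By the variant of Lemma \ref{th:hssintervalm} noted in the remark (``if $M_{\leq t}$ cannot be brought to $M_{<s}$, the interval $[s,t]$ contains a homotopy significant eigenvalue''), applied to $F$ with $s = e-\delta$, $t = e+\delta$, the contrapositive gives a homotopy $K : M_{\leq e+\delta}^{F}\times[0,1]\to M$ with $K(\cdot,0)=\mathbbm{1}$ and $K(M_{\leq e+\delta}^{F},1)\subset M_{< e-\delta}^{F}$. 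Third, I would restrict $K$ to $M_{\leq e}^{f}\times[0,1]$, which is legitimate because $M_{\leq e}^{f}\subset M_{\leq e+\delta}^{F}$; the restricted homotopy starts at the identity on $M_{\leq e}^{f}$ and ends inside $M_{< e-\delta}^{F}\subset M_{< e}^{f}$ (the last inclusion because $f \leq F + \delta$, so $F < e-\delta$ implies $f < e$). Fourth, I would conclude that this restricted homotopy witnesses that $e$ is \emph{not} homotopy significant for $f$ — contradicting the hypothesis. Hence $F$ has a homotopy significant eigenvalue $\widetilde e \in [e-\delta, e+\delta]$, i.e. $|e - \widetilde e|\leq\delta$.

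\medskip

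\noindent\textbf{The main obstacle.} The delicate point is getting the Morse-theoretic deformation from Lemma \ref{th:hssintervalm} (and its variant) to produce a homotopy valued in all of $M$ with the right endpoints, rather than merely inside the ambient sublevel — but since the Morse deformation between consecutive sublevels already takes values in $M_{\leq c_{i+1}}^{F}\subset M$, composing the finitely many pieces and the final deformation of $M_{\leq e+\delta}^{F}$ down to $M_{\leq c_m}^{F}$ stays inside $M$ automatically, so this is fine. A second subtlety is whether one should close the interval on the correct side: because $e$ is homotopy significant for $f$ we need to rule out a homotopy landing in $\Phi_{<e}^{f}$ (strict), which is exactly why I invoke the $[s,t]$-variant that produces a homotopy landing in the \emph{strict} sublevel $M_{<s}^{F}$; matching the open/closed bookkeeping at both ends is the one place where care is genuinely required. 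Everything else is routine chasing of inclusions.
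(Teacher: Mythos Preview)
Your proposal is correct and follows essentially the same route as the paper: the paper argues directly that since $f^{-1}(-\infty,e]$ cannot be brought to $f^{-1}(-\infty,e)$, the inclusions forced by $|f-F|<\delta$ imply $F^{-1}(-\infty,e+\delta]$ cannot be brought to $F^{-1}(-\infty,e-\delta)$, and then invokes the remark after Lemma~\ref{th:hssintervalm}; you run the same implication as a proof by contradiction via the contrapositive of that remark, with the same inclusion chain and the same open/closed bookkeeping.
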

\begin{proof}
If $e\in\mathbb{R}$ is an eigenvalue of $f$, this means that we cannot bring $f^{-1}(-\infty,e]$ to $f^{-1}(-\infty,e)$. Hence $F^{-1}(-\infty,e+\delta]$ cannot be brought to $F^{-1}(-\infty,e-\delta)$. The remark following Lemma \ref{th:hssintervalm} then tells us that the interval $[e+\delta,e-\delta]$ contains a homotopy significant eigenvalue.
\end{proof}

To be able to prove the stability of the homotopy significant spectrum for continuous functions, we propose to slightly change the definition of the homotopy significant spectrum to introduce the weak homotopy significant spectrum. It follows directly from the definition below that every homotopy significant eigenvalue is also weakly homotopy significant. However, for this weak homotopy significant spectrum its stability follows again almost directly.

\begin{definition}
Let $\Phi$ be a topological space and $f:\Phi\rightarrow\mathbb{R}$ a continuous real-valued function. The value $e\in\mathbb{R}$ is an eigenvalue in the weak homotopy significant spectrum of $f$ if for all  $t_1 > e$ and $t_2<e$ there does not exist a homotopy which brings $\Phi_{\leq t_1}$ to $\Phi_{\leq t_2}$.
\end{definition}

The stability of the weak homotopy significant spectrum can now be shown as in the proof of Lemma \ref{th:hssstable}. We need to combine it however with an adaptation of Lemma \ref{th:hssintervalm} to the weak homotopy significant spectrum. Hence we first prove this adaptation, and then the stability.  

\begin{lemma}\label{th:hssaltempty}
Let $\Phi$ be a topological space, $f:\Phi\rightarrow\mathbb{R}$ a continuous real-valued function and $t_1,t_2\in\mathbb{R}$ with $t_1>t_2$. If the interval $[t_2,t_1]$ does not contain a weak homotopy significant eigenvalue, there exists a homotopy which brings $\Phi_{\leq t_1}$ to $\Phi_{\leq t_2}$.
\end{lemma}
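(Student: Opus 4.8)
The plan is to prove the contrapositive flavored statement directly: assuming no point of $[t_2,t_1]$ is weakly homotopy significant, I construct a homotopy bringing $\Phi_{\leq t_1}$ down to $\Phi_{\leq t_2}$. The key observation is that if $e \in [t_2,t_1]$ is \emph{not} weakly homotopy significant, then by definition there exist $s_1 > e$ and $s_2 < e$ such that $\Phi_{\leq s_1}$ can be brought to $\Phi_{\leq s_2}$ via a homotopy. Such a pair $(s_2, s_1)$ gives an open interval $(s_2, s_1) \ni e$ with the property that the top sublevel over the interval can be homotoped into the bottom one. So the first step is: for each $e \in [t_2, t_1]$ pick such an open interval $I_e$, obtaining an open cover $\{I_e\}_{e \in [t_2,t_1]}$ of the compact interval $[t_2, t_1]$; extract a finite subcover, and from it produce a finite chain $t_2 = u_0 < u_1 < \dots < u_N = t_1$ together with values $s_2^{(j)} \le u_{j-1}$ and $s_1^{(j)} \ge u_j$ such that $\Phi_{\leq s_1^{(j)}}$ can be brought to $\Phi_{\leq s_2^{(j)}}$ for each $j$. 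A standard Lebesgue-number / interval-chaining argument delivers this; the point is just that consecutive intervals in the subcover overlap, so one can interleave their endpoints.

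The second step is to compose these $N$ homotopies. From $\Phi_{\leq u_j} \subset \Phi_{\leq s_1^{(j)}}$ and the fact that $\Phi_{\leq s_1^{(j)}}$ deforms into $\Phi_{\leq s_2^{(j)}} \subset \Phi_{\leq u_{j-1}}$, restriction of that homotopy to $\Phi_{\leq u_j}$ gives a homotopy $H^{(j)} : \Phi_{\leq u_j} \times [0,1] \to \Phi$ with $H^{(j)}(\cdot,0) = \mathbbm{1}$ and $H^{(j)}(\Phi_{\leq u_j},1) \subset \Phi_{\leq u_{j-1}}$. Now concatenate: run $H^{(N)}$ to land inside $\Phi_{\leq u_{N-1}}$, then run $H^{(N-1)}$ on that image, and so on down to $\Phi_{\leq u_0} = \Phi_{\leq t_2}$. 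Reparametrizing each factor to a subinterval of $[0,1]$ and checking continuity at the gluing times (which holds because at each gluing time both pieces agree, the incoming piece having reached a point of $\Phi_{\leq u_{j-1}}$ where the next homotopy starts at the identity) yields a single homotopy $H : \Phi_{\leq t_1} \times [0,1] \to \Phi$ with $H(\cdot,0)=\mathbbm{1}$ and $H(\Phi_{\leq t_1},1) \subset \Phi_{\leq t_2}$, as required.

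The main obstacle I anticipate is purely bookkeeping rather than conceptual: one must be careful that the chain of points $u_j$ can genuinely be chosen so that each consecutive pair is \emph{strictly} nested inside one of the chosen good intervals $(s_2, s_1)$, i.e. that $s_2^{(j)}$ and $s_1^{(j)}$ straddle the overlap of consecutive cover elements correctly. This is where compactness of $[t_2,t_1]$ is essential — without it the chain could fail to be finite and the composition could fail to terminate. A secondary technical point is the continuity of the concatenated homotopy at the finitely many gluing times; this is the usual pasting-lemma argument and is routine once the pieces are set up so that $H^{(j)}(x,1) \in \Phi_{\leq u_{j-1}} \subset \mathrm{dom}\, H^{(j-1)}$ and $H^{(j-1)}$ starts at the identity there. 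I do not expect either point to require anything beyond elementary topology, so the lemma should follow cleanly once the interval-chaining step is written out.
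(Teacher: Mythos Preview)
Your proposal is correct and follows essentially the same argument as the paper: cover $[t_2,t_1]$ by open intervals $(l_s,h_s)$ coming from the negation of weak homotopy significance, extract a finite subcover by compactness, and compose the corresponding homotopies. The paper's proof is in fact terser than yours---it does not spell out the chain $u_0<\dots<u_N$ or the pasting-lemma check---so your version fills in exactly the bookkeeping the paper leaves implicit.
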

\begin{proof}
For all $s\in[t_2,t_1]$ there exists by assumption $h_s,l_s\in\mathbb{R}$ such that $l_s< s < h_s$, and a homotopy $H_s:\Phi_{\leq h_s}\times[0,1]\rightarrow\Phi$ which brings $\Phi_{\leq h_s}$ to $\Phi_{\leq l_s}$. The set $\{(l_s,h_s)|s\in[t_2,t_1]\}$ forms a covering of $[t_2,t_1]$. Since this interval is compact, a finite set $T\subset[0,1]$ exists such that $\{(l_s,h_s)|s\in T\}$ covers $[t_2,t_1]$. If we then define the homotopy $H:\Phi_{\leq t_1}\times[0,1]\rightarrow\Phi$ as the composition of the $H_s$ for all $s\in T$, we have found our homotopy which brings $\Phi_{\leq t_1}$ to $\Phi_{\leq t_2}$.
\end{proof}

\begin{lemma} \label{th:hssaltstable}
Let $\Phi$ be a topological space and $f:\Phi\rightarrow\mathbb{R}$ a continuous real-valued function with a weak homotopy significant eigenvalue $e\in\mathbb{R}$. If $g:\Phi\rightarrow\mathbb{R}$ is continuous and there exists a $\delta>0$ such that $\sup_{x\in\Phi}|f(x)-g(x)|<\delta$, then $g$ has a weak homotopy significant eigenvalue $\widetilde{e}$ with $|e-\widetilde{e}|<\delta$.
\end{lemma}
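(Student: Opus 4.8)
The plan is to mirror the proof of Lemma~\ref{th:hssstable}, but replace the Morse-theoretic input (the remark following Lemma~\ref{th:hssintervalm}) by its weak analogue, Lemma~\ref{th:hssaltempty}. So first I would argue by contradiction: suppose $g$ has \emph{no} weak homotopy significant eigenvalue in the closed interval $[e-\delta, e+\delta]$. By Lemma~\ref{th:hssaltempty} applied to $g$ with $t_2 = e-\delta$ and $t_1 = e+\delta$, there is then a homotopy $H$ bringing $\Phi_{\leq e+\delta}$ (sublevels of $g$) to $\Phi_{\leq e-\delta}$ (again of $g$).

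The second step is to convert this into a statement about sublevels of $f$. Since $\sup_{x\in\Phi}|f(x)-g(x)| < \delta$, we have the inclusions $\{f \leq e\} \subset \{g \leq e+\delta\}$ and $\{g \leq e-\delta\} \subset \{f \leq e\}$; more usefully, for any $t_1 > e$ with $t_1 - e > \delta'$ for suitable $\delta' < \delta$... actually the clean version: pick $t_1 > e$ and $t_2 < e$. I want a homotopy bringing $f$-sublevel $\Phi_{\leq t_1}$ to $f$-sublevel $\Phi_{\leq t_2}$ for \emph{some} such $t_1, t_2$, which would contradict that $e$ is a weak homotopy significant eigenvalue of $f$. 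Choose $t_1 = e+\delta$ and $t_2 = e - \delta$ — wait, these are not strictly on the correct side in the limiting case, so instead use that the hypothesis ``$|f-g| < \delta$'' is strict: there is $\delta_0 < \delta$ with $\sup|f-g| < \delta_0$. Then $\{f \leq e + \delta_0\} \subset \{g \leq e + \delta\}$ would need $\delta_0 \le \delta$, which holds, but I actually need the reverse direction at the end. Let me instead take $\delta_0 < \delta$ with $\sup|f-g|<\delta_0$, set $t_1 := e + \delta_0 > e$ and $t_2 := e - \delta_0 < e$. Then $\{f \leq t_1\} \subset \{g \leq t_1 + \delta_0\} \subset \{g \leq e + \delta\}$ (using $2\delta_0 \le \delta$, which we may assume by shrinking $\delta_0$) and $\{g \leq e - \delta\} \subset \{g \leq e - 2\delta_0\} \subset \{f \leq e - \delta_0\} = \{f \leq t_2\}$. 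Composing the inclusion $\Phi_{\leq t_1}^{f} \hookrightarrow \Phi_{\leq e+\delta}^{g}$ with the homotopy $H$ and with the inclusion $\Phi_{\leq e-\delta}^{g} \hookrightarrow \Phi_{\leq t_2}^{f}$, we obtain a homotopy $\Phi_{\leq t_1}^{f} \times [0,1] \to \Phi$ starting at the identity and ending inside $\Phi_{\leq t_2}^{f}$ — precisely what the definition of a weak homotopy significant eigenvalue of $f$ forbids at $e$.

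The last step is bookkeeping: the eigenvalue $\widetilde e$ of $g$ produced this way lies in $[e-\delta, e+\delta]$, but we want the strict bound $|e - \widetilde e| < \delta$. This follows because we can repeat the argument with $\delta$ replaced by any $\delta' \in (\,\sup|f-g|,\ \delta\,)$, still getting a weak homotopy significant eigenvalue of $g$ in $[e - \delta', e + \delta']$; taking $\delta'$ close enough to $\sup|f-g|$ gives $|e - \widetilde e| \le \delta' < \delta$.

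I do not expect a serious obstacle here — the content is entirely in Lemma~\ref{th:hssaltempty}, which has already been established; the only care needed is the chain of inclusions between $f$- and $g$-sublevels and exploiting the strictness of the hypothesis $\sup|f-g| < \delta$ to upgrade the closed-interval conclusion to the strict inequality $|e - \widetilde e| < \delta$. The mild subtlety worth stating explicitly is that "bringing $\Phi_{\leq t_1}$ to $\Phi_{\leq t_2}$'' is preserved under pre- and post-composition with inclusions of sublevels, so that a homotopy witnessing this for $g$-sublevels at levels $e\pm\delta$ also witnesses it for $f$-sublevels at the nearby levels $t_1, t_2$.
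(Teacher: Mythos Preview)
Your approach is essentially the paper's, phrased contrapositively: the paper argues directly that since $e$ is weakly homotopy significant for $f$, the $f$-sublevels at $e\pm\epsilon$ cannot be collapsed, hence (via the inclusions) the $g$-sublevels at $e\pm(\delta'+\epsilon)$ cannot be collapsed, and then invokes Lemma~\ref{th:hssaltempty}; you assume no $g$-eigenvalue, invoke Lemma~\ref{th:hssaltempty} to get a collapse, and push it back to $f$. Same content.

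One concrete bookkeeping slip: you write ``using $2\delta_0\le\delta$, which we may assume by shrinking $\delta_0$'', but $\delta_0$ must remain strictly above $\sup|f-g|$, so when $\sup|f-g|\ge\delta/2$ this is impossible. The fix is immediate: take $t_1:=e+(\delta-\delta_0)$ and $t_2:=e-(\delta-\delta_0)$ instead of $e\pm\delta_0$; then the inclusions $\{f\le t_1\}\subset\{g\le e+\delta\}$ and $\{g\le e-\delta\}\subset\{f\le t_2\}$ need only $\delta_0<\delta$, which you have. With this correction your final step (replacing $\delta$ by any $\delta'\in(\sup|f-g|,\delta)$ to get the strict inequality) goes through cleanly, matching the paper's choice of $\delta'$ and $\epsilon$ with $\delta'+\epsilon<\delta$.
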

\begin{proof}
Choose $\delta ' > 0$ such that
\[
\sup_{x\in\Phi}|f(x)-g(x)|< \delta' <\delta
\]
Let $\epsilon > 0$ be such that $\delta' + \epsilon < \delta$.
If $e\in\mathbb{R}$ is a weak homotopy significant eigenvalue of $f$, this means in particular that we cannot bring $f^{-1}(-\infty,e + \epsilon]$ to $f^{-1}(-\infty,e-\epsilon]$. 
Then $g^{-1}(-\infty,e+\delta' + \epsilon]$ cannot be brought to $g^{-1}(-\infty,e-\delta'-\epsilon]$. Hence by Lemma \ref{th:hssaltempty}, the function $g$ contains a weak homotopy significant eigenvalue in the interval $[e-\delta'-\epsilon,e+\delta' + \epsilon]$. Therefore, the interval $(e-\delta,e+\delta)$ itself contains a weak homotopy significant eigenvalue.
\end{proof}

\begin{figure}[h!]
\centering
\begin{subfigure}{.3\linewidth}
	\centering
	\includegraphics[width=\linewidth,page=1]{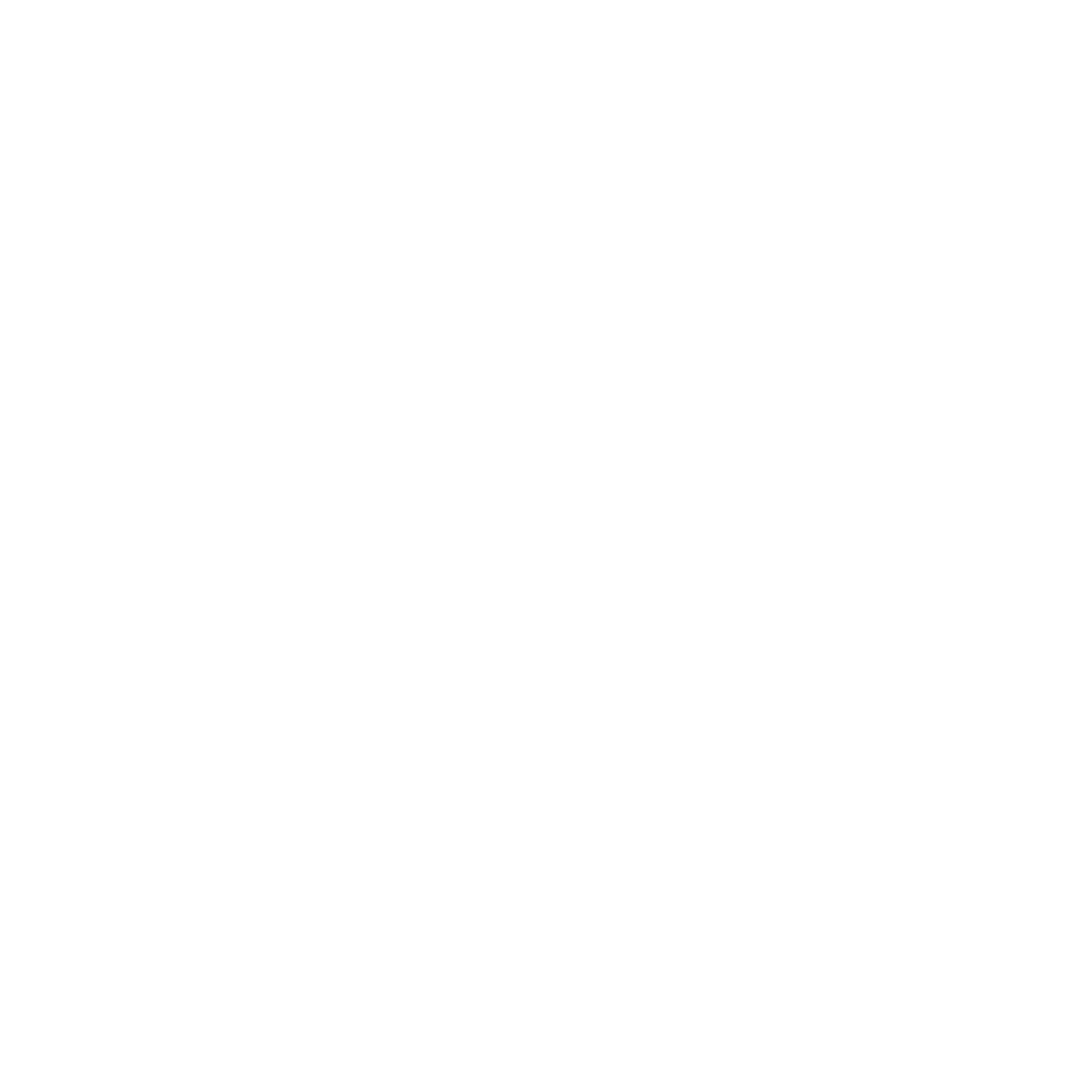}
	\caption{$V_0$, (0,3)-surgery}
\end{subfigure}
\begin{subfigure}{.3\linewidth}
	\centering
	\includegraphics[width=\linewidth,page=2]{cobordisms.pdf}
	\caption{$V_1$, (1,2)-surgery}
\end{subfigure}
\begin{subfigure}{.3\linewidth}
	\centering
	\includegraphics[width=\linewidth,page=3]{cobordisms.pdf}
	\caption{$V_2$, (1,2)-surgery}
\end{subfigure}
\par\bigskip
\begin{subfigure}{.3\linewidth}
	\centering
	\includegraphics[width=\linewidth,page=4]{cobordisms.pdf}
	\caption{$V_3$, (1,2)-surgery}
\end{subfigure}
\begin{subfigure}{.3\linewidth}
	\centering
	\includegraphics[width=\linewidth,page=5]{cobordisms.pdf}
	\caption{$V_4$, (2,1)-surgery}\label{fig:cod3}
\end{subfigure}
\begin{subfigure}{.3\linewidth} 
	\centering
	\includegraphics[width=\linewidth,page=6]{cobordisms.pdf}
	\caption{$V_5$, (2,1)-surgery} \label{fig:corpt}
\end{subfigure}
\par\bigskip
\begin{subfigure}{.3\linewidth} 
	\centering
	\includegraphics[width=\linewidth,page=7]{cobordisms.pdf}
	\caption{$V_6$, (2,1)-surgery}
\end{subfigure}
\begin{subfigure}{.3\linewidth}
	\centering
	\includegraphics[width=\linewidth,page=8]{cobordisms.pdf}
	\caption{$V_7$, (3,0)-surgery} \label{fig:corp2}
\end{subfigure}
\begin{subfigure}{.3\linewidth}
	\centering
	\includegraphics[width=\linewidth,page=1]{cobordisms.pdf}
	\caption{$V_8$}
\end{subfigure}
\caption{The sequence $(V_i)_{0\leq i\leq 8}$ of 2-dimensional submanifolds of $\rp3$. At each $V_i$ is indicated what kind of surgery should be applied to $V_i$ to obtain $V_{i+1}$. The image of the corresponding embedding of $S^{\lambda_{i+1}-1}\times D^{3-\lambda_{i+1}}$ into $V_i$ from Definition \ref{def:surgeries}, is indicated. One exception to this is the embedding of $S^1\times D^1$ in $V_4$, where we have indicated its image by ordered arrows. The outer sphere drawn from subfigure \ref{fig:cod3} onwards indicates $D^3$ on which antipodes need to be identified.} \label{fig:surgeries}
\end{figure}

\section{Examples with Morse functions}

In Section \ref{se:inequality-spectra} we provided an explicit continuous, but not differentiable, function $f$ for which the Krasnoselskii spectrum was strictly smaller than the homotopy-significant spectrum. In this section, we show that the non-differentiability of $f$ was in no way crucial. In fact, we provide examples of Morse functions with unequal spectra.

\begin{theorem}\label{th:mhsslargerks}
There exists a Morse function $F:\rp{3}\rightarrow\mathbb{R}$ such that its homotopy significant spectrum is strictly larger than its Krasnoselskii spectrum.
\end{theorem}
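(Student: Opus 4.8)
The plan is to reproduce the phenomenon of Theorem \ref{th:hsslargerks} with a smooth function. Concretely, I want a Morse function $F:\rp3\to\mathbb{R}$ together with regular values $s<t$, both strictly below $\sup F$, such that $\rp3_{\leq s}$ deformation retracts onto $\rp2\#T$ while $\rp3_{\leq t}$ contains an embedded copy of $\rp2$. Granting this, everything follows from results already established: if some homotopy brought $\rp3_{\leq t}$ into $\rp3_{\leq s}$, composing its restriction to $\rp2$ with the deformation retraction $\rp3_{\leq s}\to\rp2\#T$ would contradict Lemma \ref{th:rp2torp2t}, so $\rp3_{\leq t}$ cannot be brought into $\rp3_{\leq s}$, and by Lemma \ref{th:hssintervalm} the interval $(s,t]$ contains a homotopy significant eigenvalue $e$. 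Now $\cat_{\rp3}(\rp3_{\leq s})=\cat_{\rp3}(\rp2\#T)=3$ by Lemma \ref{th:catrpt3}, so $\hs_3\leq s<e$; and since $\rp3_{\leq t}$ is a proper subset of $\rp3$, its even lift to $S^3$ is a proper closed symmetric set, whence $\cat_{\rp3}(\rp3_{\leq t})\leq 3$ by Proposition \ref{th:genusS} and Lemma \ref{th:geniscat}, so $e\leq t<\sup F=\hs_4$. Thus $\hs_3<e<\hs_4$, so $e$ is none of $\hs_1,\dots,\hs_4$, which by Theorem \ref{th:ksinhss} are precisely the Krasnoselskii eigenvalues of $F$; hence the homotopy significant spectrum is strictly larger.

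To build $F$, I would prescribe a handle decomposition of $\rp3$ with handles attached in order of nondecreasing index and with index sequence $0,1,1,1,2,2,2,3$, arranged so that the successive regular neighborhoods deformation retract onto the $2$-complexes $V_0,\dots,V_7$ pictured in Figure \ref{fig:surgeries}: a point after the $0$-handle; a wedge of three circles after the three $1$-handles, so that a neighborhood is a genus-$3$ handlebody; then $\rp2\#T$ after attaching the first $2$-handle along a curve in that handlebody realizing the relation $a^2b^2c^2$; and after the remaining two $2$-handles a set that is, up to isotopy, $\rp3$ with an open ball removed, and hence contains a copy of $\rp2$. The final $3$-handle caps this off to give $\rp3$. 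I would then take $F$ to be a Morse function realizing this handle decomposition with distinct critical values (as in \cite{MilnorMorse}), choose $s$ to be a regular value just above the first index-$2$ critical value and $t$ a regular value between the last index-$2$ critical value and the index-$3$ critical value, and check by passing critical values one at a time that $\rp3_{\leq c}$ does deformation retract onto the claimed neighborhood of $V_i$; in particular $\rp3_{\leq s}\simeq\rp2\#T$ and $\rp2\hookrightarrow\rp3_{\leq t}\subsetneq\rp3$. One should also verify that this handle sequence really does assemble to $\rp3$, for instance by comparing it with a known handle decomposition or by checking $\chi=0$ and $\pi_1=\mathbb{Z}_2$.

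The main obstacle is producing the handle/surgery sequence itself: one must exhibit curves in the genus-$3$ handlebody along which the $2$-handles can be attached so that the intermediate surface is \emph{exactly} Dyck's surface $\rp2\#T$ and the later stages still build up $\rp3$, and confirm that this ordering by index is compatible with an actual Morse function. This is precisely the content of Figure \ref{fig:surgeries} and the embeddings of $S^{\lambda-1}\times D^{3-\lambda}$ recorded there; once it is in hand, the Morse-theoretic bookkeeping and the inequalities above are routine.
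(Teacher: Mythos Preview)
Your argument is sound, but it is not the route the paper takes for the formal proof. The paper's proof of Theorem~\ref{th:mhsslargerks} is a two-line approximation argument: start from the continuous counterexample $f=\dist(\,\cdot\,,\rp2\#T)$ of Theorem~\ref{th:hsslargerks}, whose homotopy significant spectrum contains the five values $0,0,0,r,\sup f$; pick $\delta=\min\{r/2,(\sup f-r)/2\}$; approximate $f$ in the sup norm by a Morse function $F$ with $\|F-f\|_\infty<\delta$ (density of Morse functions); and apply the stability Lemma~\ref{th:hssstable}. This forces $F$ to have at least five homotopy significant eigenvalues, while its Krasnoselskii spectrum can have at most four.

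What you outline is instead the \emph{explicit} construction that the paper presents \emph{after} the proof, via the surgery sequence of Figure~\ref{fig:surgeries} and \cite[Th.~3.12, Lemma~3.7]{MilnorCobordism}. Your bookkeeping (the chain $\hs_3\leq s<e\leq t<\hs_4$ via Lemmas~\ref{th:catrpt3}, \ref{th:rp2torp2t}, \ref{th:hssintervalm} and Proposition~\ref{th:genusS}) is correct and matches the paper's analysis of which $c_i$ are homotopy significant. The trade-off is clear: the approximation proof is short and requires no new construction, but gives no control over how many critical values $F$ has or which ones are homotopy significant; your explicit handle decomposition pins down exactly eight critical values and identifies $c_6$ as the surplus eigenvalue, at the cost of having to verify that the surgery sequence in Figure~\ref{fig:surgeries} really assembles to $\rp3$ with the claimed intermediate boundaries. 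You correctly flag this last verification as the genuine work.
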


\begin{proof}
Recall the proof of Theorem \ref{th:hsslargerks}: there we constructed a continuous function $f:\rp3\rightarrow\mathbb{R}$ with a homotopy significant spectrum containing $0,0,0,r$ and $\sup_{x\in\rp3}f(x)$. Now take $\delta=\min\{r/2,(\sup_{x\in\rp3}f(x)-r)/2\}$. We can approximate $f$ by a Morse function $F:\rp3\rightarrow\mathbb{R}$ such that $\sup_{x\in\rp3}|F(x)-f(x)|<\delta$ \cite[Th. 2.7]{MilnorCobordism}. It then follows from applying Lemma \ref{th:hssstable}, that $F$ has at least five homotopy significant eigenvalues of which only four can be in the Krasnoselskii spectrum.
\end{proof}

In this proof of Theorem \ref{th:mhsslargerks}, we used a function for which we approximately knew some of its sublevels. This was enough to show that there was a homotopy significant eigenvalue in between two sublevels, but we did not know whether there were more (homotopy significant) critical values. If we do want a Morse function for which we know all this, we can make use of surgeries which are defined as follows \cite[Def. 3.11]{MilnorCobordism}.

\begin{definition} \label{def:surgeries}
Let $V$ be a manifold of dimension $n-1$, $\lambda\in\{0,\dotsc,n\}$ and $\phi:S^{\lambda-1} \times B^{n-\lambda}\rightarrow V$ an embedding. Define
\begin{equation}
\chi(V,\phi)=(V-\phi(S^{\lambda-1}\times \{0\}))\sqcup(B^\lambda \times S^{n-\lambda-1})/\sim
\end{equation}
where the equivalence relation identifies $\phi(u,\theta v)$ with $(\theta u,v)$ for all $u\in S^{\lambda-1}$, $v\in S^{n-\lambda-1}$ and $\theta\in(0,1)$. Then it is said that a manifold $V'$ can be obtained from $V$ by a surgery of type $(\lambda,n-\lambda)$ if $V'$ is diffeomorphic to $\chi(V,\phi)$.
\end{definition}

Surgeries enable us to construct a Morse function $F:\rp3\rightarrow\mathbb{R}$ for which we can control the amount of critical values and determine which of them are homotopy significant. We do this as follows: we construct a finite sequence of $(n-1)$-dimensional manifolds $V_0, \dots, V_8$, in which each $V_{i}$ can be obtained from $V_{i-1}$ via a surgery of type $(\lambda_i,n-\lambda_i)$ for some $\lambda_i\in\{0,\dotsc,n\}$, see Figure \ref{fig:surgeries}. Then we know by \cite[Th. 3.12]{MilnorCobordism} that for each pair of subsequent manifolds $(V_{i-1},V_i)$ a smooth $n$-dimensional manifold $N_i$ and Morse function $F_i:N_i\rightarrow\mathbb{R}$ exists such that $\partial N_i=V_{i-1}\sqcup V_i$. Moreover, $F_i$ satisfies $F_i^{-1}(0)=V_{i-1}$ and $F_i^{-1}(1)=V_i$, and $F_i$ has exactly one critical value $c_i$ which has index $\lambda_i$. Define $M_0=\emptyset$ and $M_i=\bigcup_{1\leq l\leq i} N_l$ for $i\in\{1,\dotsc,8\}$, which will form the sublevels of $F$. Then we can glue all these Morse functions $F_i$ together to form one Morse function $F:M_8 \rightarrow\mathbb{R}$ with critical values $(c_i)_{1\leq i \leq 8}$ of index $(\lambda_i)_{1\leq i \leq 8}$ \cite[Lemma 3.7]{MilnorCobordism}. Moreover, our construction is such that $M_8 = \rp3$. 

We now want to see which critical values are homotopy-significant and which are not. Hence, we need to determine for which $i$, the sublevel $M_i$ cannot be brought back to $M_{i-1}$. 

Using Proposition \ref{th:genusS} we see that the category of the sublevels increases at $c_1$, $c_4$, $c_5$ and $c_8$, which implies that these values lie in the Krasnoselskii spectrum. Furthermore, we see that $c_2$ and $c_3$ are not in the homotopy significant spectrum of $F$, because $M_2$ and $M_3$ are contractible in $\rp3$.
The value $c_6$ is homotopy significant, but is not contained in the Krasnoselskii spectrum by respectively Theorem \ref{th:rp2torp2t} and Lemma \ref{th:catrpt3}.
As $M_7$ can be brought back to $\rp2\subset M_6$, the critical value $c_7$ is in neither of the two spectra. 

All in all, the Krasnoselskii spectrum of the resulting Morse function consists of $c_1$, $c_4$, $c_5$ and $c_8$, and together with $c_6$ they form its homotopy significant spectrum.

\section*{Acknowledgments}
The authors would like to thank Slava Matveev for helpful discussions, and in particular for his suggestion that eventually has led to the example function in this article, for which the Krasnoselskii spectrum differs from the homotopy significant spectrum. The authors would like to thank the referee for helpful suggestions for improving this article.

\appendix

\section{The Cheeger constant as a non-linear eigenvalue}
\label{ap:cheeger}
\makeatletter 
\gdef\thesection{\@Alph\c@section}%
\makeatother

In this appendix we show that the Cheeger constant for a closed Riemannian manifold corresponds to the second Krasnoselskii eigenvalue. Throughout this appendix, we let $(M, g)$ be a closed Riemannian manifold and we denote by $\mu$ the standard Riemannian measure divided by the total standard Riemannian measure of $M$. 

The total variation of a function $u \in L^1(M)$ is defined as
\[
TV(u) := \sup \left\{\int_M u \ \mathrm{div} \phi \ d \mu \ | \ \phi \text{ smooth vector field on $M$ with } \| \phi \|_\infty \leq 1\right\}
\]
The space of $L^1(M)$ functions with finite total variation is called the space of functions of bounded variation, and we denote it by $BV(M)$. It is a Banach space when endowed with the norm
\[
\| u \|_{BV} := TV(u) + \| u \|_1.
\]
We will denote the unit sphere in $BV(M)$ by $S$, and the corresponding projective space by $\Phi$.

The Cheeger constant can be characterized as
\[
h_1 := \inf\{ TV(u) \ | \ u \in BV(M), \ \|u\|_1 = 1,\ \median u = 0 \}.
\]

\begin{theorem}
\label{th:cheeger-constant-second-eigenvalue}
Let $(M, g)$ be a closed Riemannian manifold. Denote by $BV(M)$ the Banach space of functions of bounded variation on $M$. Define on $BV(M)\setminus \{0\}$ the normalized energy
\[
\mathcal{E}(u) := \frac{TV(u) }{\int_M |u| d \mu}
\]
where $\mu$ is the standard Riemannian volume measure divided by the total volume of $M$.
Then the eigenvalues $\kras_2$ and $\hs_2$ of $\mathcal{E}$ equal the Cheeger constant $h_1$.
\end{theorem}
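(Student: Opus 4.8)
The plan is to reduce everything to the loop min-max characterization of the second eigenvalue in Lemma~\ref{le:characterization-second-eigenvalue}. Since $\mathcal{E}$ is even and invariant under nonzero scalings, and since the denominator $\int_M|u|\,d\mu$ is strictly positive on the $BV$-unit sphere $S$, the functional $\mathcal{E}$ descends to a continuous function on $\Phi$, so Lemma~\ref{le:characterization-second-eigenvalue} applies and gives that $\kras_2=\hs_2$ equals the infimum of $\sup_{s}\mathcal{E}(\ell(s))$ over non-contractible loops $\ell$ in $\Phi$. Lifting through the covering $S\to\Phi$, a non-contractible loop is the same thing as a path $P\colon[0,1]\to S$ with $P(1)=-P(0)$, so it remains to show that
\[
\inf\Big\{\sup_{s\in[0,1]}\mathcal{E}(P(s))\ \Big|\ P\colon[0,1]\to S\text{ continuous},\ P(1)=-P(0)\Big\}=h_1,
\]
which I would prove as two separate inequalities.

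For ``$\le$'', given $\varepsilon>0$ I would take $u\in BV(M)$ with $\|u\|_1=1$, $\median u=0$ and $TV(u)<h_1+\varepsilon$ (such a $u$ is automatically non-constant). I would then build the path by concatenating the two affine segments $t\mapsto(1-t)u+t\mathbbm{1}$ and $t\mapsto(1-t)\mathbbm{1}-tu$, which connect $u$ to $\mathbbm{1}$ and $\mathbbm{1}$ to $-u$; neither segment passes through $0$ because that would force $u$ to be a nonzero constant, so normalizing by $\|\cdot\|_{BV}$ yields a path in $S$ from $\hat u$ to $-\hat u$. The estimate on $\mathcal{E}$ along these segments uses only that $TV$ is homogeneous of degree one and unaffected by adding a constant (since $\int_M\mathrm{div}\,\phi\,d\mu=0$ on the closed manifold $M$), together with the fact that $0$ being a median of $u$ makes $\inf_{c\in\mathbb{R}}\|u-c\|_1=\|u\|_1=1$; these give $\mathcal{E}\le TV(u)<h_1+\varepsilon$ at every point of the path, and letting $\varepsilon\downarrow0$ yields $\kras_2=\hs_2\le h_1$.

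For ``$\ge$'', I would argue that every path $P\colon[0,1]\to S$ with $P(1)=-P(0)$ must pass through a function whose median is $0$. Set $g(s)=\mu(\{P(s)<0\})$ and $h(s)=\mu(\{P(s)\le0\})$; then $0$ is a median of $P(s)$ iff $g(s)\le\tfrac12\le h(s)$. Since $P$ is continuous into $L^1(M)$, convergence in measure shows that $g$ is lower semicontinuous and $h$ is upper semicontinuous. If no $P(s)$ had median $0$, then $\{g>\tfrac12\}$ and $\{h<\tfrac12\}$ would be open, disjoint (they cannot overlap because $g\le h$), and would cover $[0,1]$; moreover both are nonempty, since if $P(0)$ is not median-zero it lies in one of them and the relation $P(1)=-P(0)$ then forces $P(1)$ into the other. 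This contradicts connectedness of $[0,1]$. Hence some $P(s_0)$ has median $0$; it is non-constant (the only constants on $S$ are $\pm\mathbbm{1}$, with median $\pm1$), so by the very definition of $h_1$ we get $\mathcal{E}(P(s_0))=TV(P(s_0))/\inf_c\|P(s_0)-c\|_1\ge h_1$, whence $\sup_s\mathcal{E}(P(s))\ge h_1$ and therefore $\kras_2=\hs_2\ge h_1$.

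The routine parts are the path estimates in the upper bound. The real obstacle is the intermediate-value step in the lower bound: the median is not a continuous function of $u\in L^1(M)$, so one cannot invoke the intermediate value theorem directly, and the passage through a median-zero function has to be extracted from the one-sided semicontinuity of the two sublevel-measure functions $g$ and $h$ via the clopen-partition argument above.
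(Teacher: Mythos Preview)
Your proposal is correct and follows essentially the same route as the paper: reduce to the loop characterization of Lemma~\ref{le:characterization-second-eigenvalue}, build an explicit loop through $u$ and the constant function for the upper bound, and prove an intermediate-value-type statement (that every path between antipodes meets a median-zero function) for the lower bound. The only differences are cosmetic: the paper parametrizes its loop as $s\mapsto[\tan(s)+u]$ rather than by two affine segments, and for the median-zero step it isolates the semicontinuity of $u\mapsto\mu(\{u\le 0\})$ and $u\mapsto\mu(\{u\ge 0\})$ in a separate proposition and takes $\sigma=\sup\{s:\mu(\{\rho(s)\le 0\})\ge 1/2\}$ instead of your clopen-partition argument.
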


The proof of this equality of the Cheeger constant and the second eigenvalue will be easier to present if we first recall the concept of a \emph{median}.

\begin{definition}
We say that a constant $m \in \mathbb{R}$ is a \emph{median} for a function $u : M \to \mathbb{R}$, if both
\[
\mu( \{ u \leq m \} ) \geq 1/2 \qquad \text{and} \qquad \mu(\{ u \geq m \} ) \geq 1/2.
\]
\end{definition}
The function $t \mapsto \mu(\{u \leq t\})$ is right-continuous as it is the distribution function of the pushforward measure $u_{\#} \mu$, and the function $t \mapsto \mu(\{ u \geq t\})$ is left-continuous. 
These continuity properties imply that $m$ is a median for $u$ if and only if
\[
\min\{ c \in \mathbb{R} \ |\ \mu(\{ u \leq c\} ) \geq 1/2\}
\leq m \leq
 \max\{ c \in \mathbb{R} \ |\ \mu(\{ u \geq c\} )\geq 1/2 \}
\]
and in particular a median of $u$ always exists.

\begin{proposition}
\label{pr:ivt-median}
For every continuous curve $\rho : [0, 1] \to S$ between two antipodes on the unit sphere $S$ in $BV(M)$, there exists a $\sigma \in [0, 1]$ such that $0$ is a median for $\rho(\sigma)$.
\end{proposition}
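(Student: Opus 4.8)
The plan is to run an intermediate-value argument, but realized through a closed covering of $[0,1]$ rather than through a single scalar function: the naive candidate $t \mapsto \mu(\{\rho(t)\le 0\})$ need not be continuous, because an $L^1$-small perturbation of $\rho(t)$ can push a set of positive measure across the level $\{=0\}$ whenever the pushforward $\rho(t)_\#\mu$ has an atom at $0$. So instead, writing $\rho(1)=-\rho(0)$ for the two antipodal endpoints, I would set
\[
K := \{t\in[0,1] : \mu(\{\rho(t)\le 0\})\ge \tfrac12\}, \qquad L := \{t\in[0,1] : \mu(\{\rho(t)\ge 0\})\ge \tfrac12\}.
\]
By the definition of a median, any $\sigma\in K\cap L$ proves the proposition, so it suffices to show $K\cap L\ne\emptyset$.

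I would then establish three facts. First, $K\cup L=[0,1]$: for each $t$ the complementary sets $\{\rho(t)\le 0\}$ and $\{\rho(t)>0\}$ have $\mu$-masses summing to $1$, so at least one is $\ge\frac12$, putting $t$ in $K$, or in $L$ since $\{\rho(t)>0\}\subset\{\rho(t)\ge 0\}$. Second, $K$ and $L$ are closed. Third, $\rho(1)=-\rho(0)$ gives $\{\rho(1)\le 0\}=\{\rho(0)\ge 0\}$ and $\{\rho(1)\ge 0\}=\{\rho(0)\le 0\}$, hence $1\in K\iff 0\in L$ and $0\in K\iff 1\in L$. With these in hand, connectedness finishes the argument: if $K\cap L=\emptyset$, then $[0,1]=K\sqcup L$ with both sets closed, so one of them is empty; but $K=\emptyset$ forces $L=[0,1]\ni 1$, whence $0\in K$ by the third fact, a contradiction, and $L=\emptyset$ is impossible symmetrically. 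Therefore $K\cap L\ne\emptyset$, and any $\sigma$ in it has $0$ as a median of $\rho(\sigma)$.

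The only step with real content, and the place I expect the main obstacle, is the closedness of $K$ (and $L$, by the symmetric argument). Here I would use that $\rho$ is continuous into $BV(M)$, hence into $L^1(M)$. Given $t_n\to t$ with $t_n\in K$, extract a subsequence with $\rho(t_{n_k})\to\rho(t)$ $\mu$-almost everywhere. Then $\mathbbm{1}_{\{\rho(t)<0\}}\le\liminf_k\mathbbm{1}_{\{\rho(t_{n_k})\le 0\}}$ $\mu$-a.e. (if $\rho(t)(x)<0$ the right-hand indicators are eventually $1$; otherwise the bound is trivial), so Fatou's lemma gives $\mu(\{\rho(t)<0\})\le\liminf_k\mu(\{\rho(t_{n_k})\le 0\})$; since every term on the right is $\ge\frac12$, we get $\mu(\{\rho(t)\le 0\})\ge\mu(\{\rho(t)<0\})\ge\frac12$, i.e. $t\in K$. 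Closedness of $L$ follows verbatim with the roles of $<$ and $\le$ interchanged, and everything else is elementary point-set topology.
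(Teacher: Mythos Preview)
Your overall architecture is sound and essentially matches the paper's: both arguments reduce to showing that the sets
\[
K=\{t:\mu(\{\rho(t)\le 0\})\ge\tfrac12\},\qquad L=\{t:\mu(\{\rho(t)\ge 0\})\ge\tfrac12\}
\]
are closed (equivalently, that $u\mapsto\mu(\{u\le 0\})$ and $u\mapsto\mu(\{u\ge 0\})$ are upper semicontinuous on $L^1$) and then use connectedness of $[0,1]$. The paper concludes by taking $\sigma=\sup K$ and verifying $\sigma\in K\cap L$; you conclude by a clopen contradiction. Both are fine.

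However, your Fatou step is written with the inequality pointing the wrong way, and the conclusion you draw from it is a non sequitur. You correctly establish
\[
\mathbbm{1}_{\{\rho(t)<0\}}\le\liminf_k\mathbbm{1}_{\{\rho(t_{n_k})\le 0\}}\quad\mu\text{-a.e.},
\]
and Fatou then gives
\[
\mu(\{\rho(t)<0\})\;\le\;\liminf_k\mu(\{\rho(t_{n_k})\le 0\}).
\]
Knowing that the right-hand side is $\ge\tfrac12$ tells you only that $\mu(\{\rho(t)<0\})$ is bounded above by something at least $\tfrac12$; it does \emph{not} yield $\mu(\{\rho(t)<0\})\ge\tfrac12$. (Concretely, $\rho(t_{n_k})\equiv -1/k$ and $\rho(t)\equiv 0$ satisfies your hypotheses with $\mu(\{\rho(t)<0\})=0$.)

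The fix is immediate: run Fatou on the complements, or equivalently use reverse Fatou on the bounded indicators. Pointwise, whenever $\rho(t)(x)>0$ the a.e.\ convergence forces $\rho(t_{n_k})(x)>0$ eventually, so
\[
\limsup_k \mathbbm{1}_{\{\rho(t_{n_k})\le 0\}}\le \mathbbm{1}_{\{\rho(t)\le 0\}}\quad\mu\text{-a.e.}
\]
Since the indicators are dominated by $1\in L^1(\mu)$, reverse Fatou gives
\[
\tfrac12\le\limsup_k\mu(\{\rho(t_{n_k})\le 0\})\le\mu(\{\rho(t)\le 0\}),
\]
which is exactly $t\in K$. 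The argument for $L$ is symmetric. With this correction your proof goes through and is equivalent in spirit to the paper's upper-semicontinuity proof (which uses a $\delta$-thickening plus Chebyshev rather than a.e.\ subsequences and Fatou).
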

\begin{proof}
Without loss of generality, we can assume that $\mu(\{ \rho(0) \leq 0 \}) \geq 1/2$.

As a step in between, we prove that
for a fixed $c \in \mathbb{R}$, the function $u \mapsto \mu( \{ u \leq c \} )$ from $L^1$ to $\mathbb{R}$ is upper semicontinuous. For that we need to show that for every sequence $(u_i)$ in $L^1$ converging to $u$ in $L^1$, it holds that
\[
\limsup_{i \to \infty}
\mu(\{ u_i \leq \delta\}) \leq \mu(\{ u \leq \delta \})
\]
Let $u_1, u_2, \dots$ be a sequence in $L^1$ converging to $u$. Let $\epsilon > 0$. Then, there exists a $\delta > 0$ such that $\mu( \{ u \leq c + \delta \} ) < \mu(\{u \leq c\}) + \epsilon$. As a consequence, for $i$ large enough, $\mu(\{ u_i \leq c \} ) < \mu(\{ u \leq c \}) + \epsilon$. This proves that $u \mapsto \mu(\{ u \leq c \})$ is upper semicontinuous. 

Similarly, the function $u \mapsto \mu( \{ u \geq c \})$ is upper semicontinuous. It follows that the functions $s \mapsto \mu( \{\rho(s) \leq 0 \} )$ and $s \mapsto \mu( \{\rho(s) \geq 0\})$ are upper semicontinuous.

We now define
\[
\sigma := \sup \{s \in [0,1] \ | \ \mu( \{ \rho(s) \leq 0 \} ) \geq 1/2\}.
\]
By upper semicontinuity, we find $\mu(\{\rho(\sigma) \leq 0\}) \geq 1/2$ and $\mu(\{\rho(\sigma) \geq 0\}) \geq 1/2$. Therefore, $0$ is a median for $\rho(\sigma)$.
\end{proof}

\begin{proof}[Proof of Theorem \ref{th:cheeger-constant-second-eigenvalue}]
We will rely on the characterization of the second eigenvalue by Lemma \ref{le:characterization-second-eigenvalue}, namely
\[
\kras_2 = \hs_2 = \inf \left\{
\sup_{s \in [0,1]} \mathcal{E}(\ell(s)) \ | \ 
\ell: [0, 1] \to \Phi \text{ non-contractible loop} 
\right\}
\]

We first show that $h_1 \leq \kras_2$. Fix an arbitrary non-contractible loop $\ell: [0,1] \to \Phi$. 
Then $\ell$ lifts to a curve $\rho: [0,1] \to S$ between two antipodes on the unit sphere $S$ in $BV(M)$.
By Proposition \ref{pr:ivt-median}, there is a $\sigma \in [0, 1]$ such that $0$ is a median for $\ell(\sigma)$.
Therefore,
\[
h_1 \leq \mathcal{E}(\ell(\sigma)) \leq \sup_{s \in [0,1]} \mathcal{E}(\ell(s))
\]
As $\ell$ was an arbitrary non-contractible loop, we find that $h_1 \leq \kras_2$.

We now show that $\kras_2 \leq h_1$.
Let $u$ be a function in $BV(M)$ such that $\ \| u \|_1 = 1$ and $0$ is a median for $u$.
Consider the curve $\ell :[-\pi/2, \pi/2] \to \Phi$ given by 
\[
\ell(s) := [\tan(s) + u]
\]
for $s \in (-\pi/2, \pi/2)$ and extended continuously to the boundary. That is, $\ell(-\pi/2)$ and $\ell(\pi/2)$ equal the equivalence class of the constant, non-zero function. Then $\ell$ is non-contractible in $\Phi$, and for all $s \in [-\pi/2, \pi/2]$, it holds that $\mathcal{E}(\ell(s)) \leq TV(u)$, where we used that a number $m \in \mathbb{R}$ is a median of $u$ if $c = m$ minimizes
\[
\int_M | u - c | d \mu.
\]
Therefore
\[
\kras_2 \leq TV(u)
\]
and as $u$ was arbitrary with $\|u\| = 1$ and $0 \in \median u$, we find $\kras_2 \leq h_1$.
\end{proof}

\bibliographystyle{plain}
\bibliography{Comparison}

\end{document}